\documentclass[onefignum,onetabnum,oneeqnum,onealgnum]{siamart251216}

\usepackage{amssymb}
\usepackage{algpseudocode}
\usepackage{graphicx}
\usepackage{booktabs}
\usepackage{placeins}
\usepackage{needspace}

\makeatletter
\DeclareRobustCommand{\ivpchanged}[1]{\leavevmode{\color{red}%
  \def\@linkcolor{red}\def\@citecolor{red}\def\@urlcolor{red}#1}}
\makeatother
\theoremstyle{plain}
\theoremheaderfont{\normalfont\itshape}
\theorembodyfont{\normalfont}
\theoremseparator{.}
\theoremsymbol{}
\newtheorem{assumption}{Assumption}
\crefname{assumption}{Assumption}{Assumptions}
\Crefname{assumption}{Assumption}{Assumptions}
\newsiamremark{example}{Example}

\newcounter{ivpresultpart}[theorem]

\newcommand{\ivpresultpart}{%
  \refstepcounter{ivpresultpart}%
  \textup{(\roman{ivpresultpart})}}
\newcounter{ivpassumptionpart}[assumption]

\newcommand{\ivpassumptionpart}{%
  \refstepcounter{ivpassumptionpart}%
  \textup{(\roman{ivpassumptionpart})}}

\newcommand{\TheTitle}{Projection-based parareal for nonlinear
  initial-value problems}
\newcommand{\TheAuthors}{C. Tan, Y. Xu, Y. Zhang, and Y. Su}
\headers{Projection-based parareal}{\TheAuthors}
\title{\TheTitle}
\author{
  Chenyi Tan\thanks{School of Mathematical Sciences, Fudan University,
    Shanghai, China (\email{cytan22@m.fudan.edu.cn},
    \email{xuyc23@m.fudan.edu.cn}, \email{yehaozhang23@m.fudan.edu.cn},
    \email{yfsu@fudan.edu.cn}).}
  \and
  Yuncheng Xu\footnotemark[1]
  \and
  Yehao Zhang\footnotemark[1]
  \and
  Yangfeng Su\footnotemark[1]
}

\ifpdf
\hypersetup{
  pdftitle={Projection-based parareal for nonlinear initial-value problems},
  pdfauthor={\TheAuthors}
}
\fi

\begin{document}

\maketitle

\begin{abstract}
\emph{Parareal} is a parallel-in-time method for initial-value problems
that has found extensive applications and given rise to many variants.
Krylov-enhanced \emph{parareal} is a variant of \emph{parareal} that uses
projection techniques to accelerate convergence substantially for linear
problems. In this paper, we study a
projection-based \emph{parareal} algorithm for nonlinear
initial-value problems with a modified iteration scheme and an arbitrary orthogonal projection.  For a general
nonlinear problem, we prove a convergence estimate for
each iteration step on a fixed time grid.
We analyze the errors of the modified scheme and the scheme of Krylov-enhanced
\emph{parareal} relative to the old error as the coarse grid is refined.
For a coarse solver of order \(p\) and step size \(h\), we prove error bounds
of \(O(h^{p+1})\) for the modified scheme and \(O(h)\) for that of
Krylov-enhanced \emph{parareal} at the first potentially inexact
time point under mild assumptions.
Numerical experiments on two nonlinear problems are consistent with the
theoretical analysis.
\end{abstract}

\begin{keywords}
parareal, parallel-in-time integration, nonlinear initial-value
problems, convergence analysis, asymptotic error estimates
\end{keywords}

\begin{MSCcodes}
65L05, 65L20, 65L70, 65Y05
\end{MSCcodes}

\section{Introduction}
\label{sec:introduction}

Consider the initial-value problem
\begin{equation}
  \label{eq:initial-value-problem}
  \mathbf u'(t)=\mathbf f(\mathbf u(t),t),
  \qquad t\in[0,T],
  \qquad \mathbf u(0)=\mathbf u_0,
\end{equation}
where \(\mathbf f:\mathbb R^d\times[0,T]\to\mathbb R^d\) is nonlinear and the
dimension \(d\) is fixed.  Problems of this form arise from nonlinear
dynamical models and from spatially discretized evolution
equations~\cite{hundsdorfer2003numerical,shu1988efficient,cockburn1989tvb}.
Representative examples include chemical reaction
kinetics~\cite{hairer1996solving}, pattern formation in reaction--diffusion
systems~\cite{turing1952chemical,prigogine1968symmetry}, molecular and
celestial dynamics~\cite{hairer2006geometric}, nerve
excitation~\cite{hodgkin1952quantitative}, and structural
dynamics~\cite{newmark1959method}.

For the numerical solution of initial-value problems, time integrators such
as Runge--Kutta~\cite{hairer1993solving,butcher2016numerical} and multistep
methods~\cite{hairer1993solving,hairer1996solving,ascher1998computer}
advance an approximation sequentially from one time point to the next.
The time discretization error depends on the step size and the order of
accuracy of the method.  The sequential structure of these methods can
lead to long computation times when small step sizes are required.
Many parallel-in-time integration methods instead compute on different time
intervals concurrently and enforce continuity across them by iteration.
Examples include the parallel full approximation scheme in space and time
(PFASST)~\cite{emmett2012toward}, multigrid reduction in time
(MGRIT)~\cite{falgout2014parallel}, and space--time multigrid
methods~\cite{horton1995spacetime,gander2016spacetime}.
Surveys of parallel-in-time integration are given
in~\cite{gander2015fifty,ong2020applications}.

The \emph{parareal} method is a parallel-in-time approach introduced
by Lions et al. for initial-value problems~\cite{lions2001parareal}.
The method combines parallel fine solves on separate subintervals
with a sequential coarse update of the approximation.
Each \emph{parareal} iteration updates the approximation over the entire time interval.
Convergence analyses for linear and nonlinear problems are given
in~\cite{bal2005convergence,gander2007parareal,gander2008nonlinear}.

Projection techniques have been widely used in variants of
\emph{parareal}, such as methods with reduced-order coarse
solvers~\cite{he2010reduced,chen2014parareal},
projection-stabilized methods~\cite{dai2013projection}, and low-rank
methods~\cite{carrel2023lowrank,gander2025lowrankcoarse,vargas2023multigrid}.
Krylov-enhanced \emph{parareal} was introduced by Gander and
Petcu~\cite{gander2008krylov} to improve the classical \emph{parareal}
iteration through projection techniques.
It uses a subspace constructed from solution histories
to enhance the coarse solver.
The idea goes back to the PITA framework of Farhat
et al.~\cite{farhat2006pita}.
Gander and Petcu~\cite{gander2008krylov} analyzed this method for linear
problems and reported substantially faster convergence than classical
\emph{parareal} in numerical experiments.

In this paper, we study a projection-based \emph{parareal} algorithm for nonlinear
initial-value problems with a modified iteration scheme and an arbitrary orthogonal projection.
Consider a time grid that divides \([0,T]\) into \(N\) coarse intervals
of equal length \(h=T/N\).
With this grid fixed, we prove a convergence estimate for each iteration
step of the algorithm.
The estimate separates the contribution generated by the old error on each
interval from its propagation to later time points and shows how both
depend on the orthogonal projection chosen for that iteration step.
In contrast, Krylov-enhanced \emph{parareal} introduces the projection
through a different iteration scheme.
We then derive asymptotic error estimates for the two schemes
relative to the old error as \(h\to0\).
For a coarse solver of order \(p\), we prove error estimates of
\(O(h^{p+1})\) for the modified scheme and \(\Theta(h)\) for the scheme of
Krylov-enhanced \emph{parareal} under mild assumptions.
Both refer to the first time point where the new approximation can be inexact.
At later time points, the corresponding upper bounds are
\(O(h^{p+1})\) and \(O(h)\) over a bounded number of coarse intervals, and
\(O(h^p)\) and \(O(1)\) over a fixed time window.

The paper is organized as follows. Section~\ref{sec:propagators} presents the algorithm after recalling
classical \emph{parareal} and Krylov-enhanced \emph{parareal}.
Section~\ref{sec:projection-dependent-analysis} analyzes convergence
as the iteration proceeds.
Section~\ref{sec:form-boundary} presents an asymptotic error analysis
as the coarse grid is refined.
Section~\ref{sec:numerical-experiments} reports numerical experiments.
Section~\ref{sec:conclusion} concludes the paper.

\section{Projection-based \emph{parareal}}
\label{sec:propagators}

This section first recalls classical \emph{parareal} and
Krylov-enhanced \emph{parareal}.
It then presents projection-based \emph{parareal} and compares its
iteration scheme with that of Krylov-enhanced \emph{parareal}.

\subsection{Classical \emph{parareal}}
\label{subsec:classical-parareal}

Consider the nonlinear initial-value
problem~\eqref{eq:initial-value-problem}.
Let \(N\ge2\) be an integer and define the uniform coarse time points by
\[
  T_n:=nh,\qquad h:=T/N,\qquad n=0,\ldots,N.
\]
On each coarse interval \([T_n,T_{n+1}]\), classical \emph{parareal}
uses a fine solver and a coarse solver.
For \(n=0,\ldots,N-1\), let \(\Omega_n\subseteq\mathbb R^d\) be a common
domain of initial values for the two solvers.
Assume that \(\mathbf0\in\Omega_n\) for every \(n\).
For \(\mathbf x\in\Omega_n\), let
\[
  \mathcal F(T_{n+1},T_n,\mathbf x),\qquad
  \mathcal G(T_{n+1},T_n,\mathbf x)
\]
denote the fine and coarse numerical approximations at \(T_{n+1}\) for
the local initial-value problem
\[
  \mathbf u'(t)=\mathbf f(\mathbf u(t),t),\qquad
  \mathbf u(T_n)=\mathbf x,\qquad t\in[T_n,T_{n+1}].
\]
The fine solver is chosen for accuracy, and the coarse solver
for lower computational cost.
For brevity, define \(\mathcal F_n,\mathcal G_n:\Omega_n\to\mathbb R^d\) by
\[
  \mathcal F_n(\mathbf x):=\mathcal F(T_{n+1},T_n,\mathbf x),\qquad
  \mathcal G_n(\mathbf x):=\mathcal G(T_{n+1},T_n,\mathbf x).
\]

Let \(\mathbf U^k=(\mathbf U_0^k,\ldots,\mathbf U_N^k)\in(\mathbb R^d)^{N+1}\)
denote an approximation after \(k\) iterations, with \(\mathbf U_n^k\) at the
coarse time point \(T_n\).
Starting from \(\mathbf U^0\) with \(\mathbf U_0^0=\mathbf u_0\),
the classical \emph{parareal} iteration~\cite{lions2001parareal,baffico2002parallel} computes
\begin{equation}
  \label{eq:classical-parareal}
  \begin{aligned}
    \mathbf U_0^{k+1}&=\mathbf u_0,\\
    \mathbf U_{n+1}^{k+1}
    &=\mathcal F_n(\mathbf U_n^k)
      +\mathcal G_n(\mathbf U_n^{k+1})
      -\mathcal G_n(\mathbf U_n^k),
  \end{aligned}
\end{equation}
for \(k=0,1,\ldots\) and \(n=0,\ldots,N-1\).
The solver values at the old approximation \(\mathbf U^k\) can be computed in parallel
across the coarse intervals.
The new approximation \(\mathbf U^{k+1}\) is computed sequentially.

Assume that the following recurrence is well defined:
\begin{equation}
  \label{eq:fine-reference}
  \mathbf u_0^\star=\mathbf u_0,\qquad
  \mathbf u_{n+1}^\star=\mathcal F_n(\mathbf u_n^\star),
  \qquad n=0,\ldots,N-1.
\end{equation}
The resulting sequence
\(\mathbf u^\star=(\mathbf u_0^\star,\ldots,\mathbf u_N^\star)\)
is called the \emph{fine reference solution}.
Iteration errors are measured relative to this reference:
\[
  \mathbf e_n^k:=\mathbf u_n^\star-\mathbf U_n^k,\qquad
  E_\infty^k:=\max_{0\le n\le N}\lVert\mathbf e_n^k\rVert_2,
\]
where \(\lVert\cdot\rVert_2\) denotes the Euclidean norm.

\subsection{Krylov-enhanced \emph{parareal}}
\label{subsec:krylov-enhanced-parareal}

To accelerate convergence of the classical \emph{parareal}
iteration, Krylov-enhanced
\emph{parareal}~\cite{gander2008krylov} introduces an
orthogonal projection into \eqref{eq:classical-parareal} by applying the fine
solver to the projected part of the argument and the coarse solver to
the complementary part.
The construction is first stated with \(\mathcal F_n\), \(\mathcal G_n\),
and an arbitrary orthogonal projection. The original method is then recovered
as a special case.
Let \(I\) denote the identity on \(\mathbb R^d\).
For an orthogonal projection \(P\), set \(Q:=I-P\) and define
\begin{equation}
  \label{eq:direct-split-propagator}
  \mathcal S_{P,n}(\mathbf x)
  :=\mathcal F_n(P\mathbf x)+\mathcal G_n(Q\mathbf x),
  \qquad \mathbf x,P\mathbf x,Q\mathbf x\in\Omega_n.
\end{equation}
At each iteration step \(k\to k+1\), choose an arbitrary orthogonal
projection \(P^k\) and keep it fixed across the coarse intervals.
The projection may change between iteration steps.
Replacing \(\mathcal G_n\) in \eqref{eq:classical-parareal} by
\(\mathcal S_{P^k,n}\) yields
\begin{equation}
  \label{eq:direct-split-iteration}
  \begin{aligned}
    \mathbf U_0^{k+1}&=\mathbf u_0,\\
    \mathbf U_{n+1}^{k+1}
    &=\mathcal F_n(\mathbf U_n^k)
      +\mathcal S_{P^k,n}(\mathbf U_n^{k+1})
      -\mathcal S_{P^k,n}(\mathbf U_n^k),
  \end{aligned}
\end{equation}
for \(n=0,\ldots,N-1\).
Set \(Q^k:=I-P^k\).
An iteration step \(k\to k+1\) of scheme \eqref{eq:direct-split-iteration} is called
\emph{admissible} if
\[
  \mathbf U_n^k,\ \mathbf U_n^{k+1},\ P^k\mathbf U_n^k,\ P^k\mathbf U_n^{k+1},\
  Q^k\mathbf U_n^k,\ Q^k\mathbf U_n^{k+1}\in\Omega_n,
  \qquad n=0,\ldots,N-1.
\]
All iteration steps of scheme \eqref{eq:direct-split-iteration} considered
in this paper are assumed admissible.
Krylov-enhanced \emph{parareal}~\cite{gander2008krylov} is formulated for
linear initial-value problems with linear fine and coarse solvers.
It takes \(P^k\) as the orthogonal projection onto the \emph{Krylov-enhanced subspace}
\[
  \mathcal V^k:=\operatorname{span}
  \{\mathbf U_j^\ell:0\le\ell\le k,\ 0\le j\le N\}.
\]
Then \(P^k\mathbf U_n^k=\mathbf U_n^k\) and \(Q^k\mathbf U_n^k=\mathbf0\).
Hence
\(\mathcal S_{P^k,n}(\mathbf U_n^k)=\mathcal F_n(\mathbf U_n^k)+\mathcal G_n(\mathbf0)\)
and \eqref{eq:direct-split-iteration} becomes
\[
  \mathbf U_{n+1}^{k+1}
  =\mathcal S_{P^k,n}(\mathbf U_n^{k+1})-\mathcal G_n(\mathbf0).
\]
With the sequential coarse initialization
\(\mathbf U_{n+1}^0=\mathcal G_n(\mathbf U_n^0)\), this is the scheme of
Krylov-enhanced \emph{parareal}~\cite[Sec.~4.3, Eq.~(4.10)]{gander2008krylov}.
For homogeneous linear fine and coarse solvers,
\(\mathcal G_n(\mathbf0)=\mathbf0\).
The scheme then reduces to that in~\cite[Sec.~4.1, Eqs.~(4.3) and~(4.5)]{gander2008krylov}.
Scheme \eqref{eq:direct-split-iteration} extends the scheme of Krylov-enhanced
\emph{parareal} to nonlinear solvers and an arbitrary orthogonal projection.

\subsection{A projection-based \emph{parareal} algorithm}
\label{subsec:projection-based-parareal}

We now introduce projection-based \emph{parareal}.
For an orthogonal projection \(P\) on \(\mathbb R^d\), define
\begin{equation}
  \label{eq:discrepancy-form-propagator}
  \mathcal H_{P,n}(\mathbf x)
  :=\mathcal G_n(\mathbf x)+\mathcal F_n(P\mathbf x)
                          -\mathcal G_n(P\mathbf x),
  \qquad \mathbf x,P\mathbf x\in\Omega_n.
\end{equation}
The map \(\mathcal H_{P,n}\) adds the discrepancy
\(\mathcal F_n(P\mathbf x)-\mathcal G_n(P\mathbf x)\) at \(P\mathbf x\)
to the coarse value \(\mathcal G_n(\mathbf x)\).
With \(P^k\) chosen at each iteration step as for scheme
\eqref{eq:direct-split-iteration}, replacing \(\mathcal G_n\) in
\eqref{eq:classical-parareal} by \(\mathcal H_{P^k,n}\) yields
\begin{equation}
  \label{eq:projection-parareal}
  \begin{aligned}
    \mathbf U_0^{k+1}&=\mathbf u_0,\\
    \mathbf U_{n+1}^{k+1}
    &=\mathcal F_n(\mathbf U_n^k)
      +\mathcal H_{P^k,n}(\mathbf U_n^{k+1})
      -\mathcal H_{P^k,n}(\mathbf U_n^k),
  \end{aligned}
\end{equation}
for \(n=0,\ldots,N-1\).
An iteration step \(k\to k+1\) of scheme \eqref{eq:projection-parareal} is called
\emph{admissible} if
\[
  \mathbf U_n^k,\ \mathbf U_n^{k+1},\ P^k\mathbf U_n^k,\ P^k\mathbf U_n^{k+1}
  \in\Omega_n,
  \qquad n=0,\ldots,N-1.
\]
All iteration steps of scheme \eqref{eq:projection-parareal} considered in
this paper are assumed admissible.

Once \(P^k\) is fixed, the values
\(\mathcal F_n(\mathbf U_n^k)\) and \(\mathcal H_{P^k,n}(\mathbf U_n^k)\)
can be computed in parallel across the coarse intervals.
The new approximation \(\mathbf U^{k+1}\) is computed sequentially from
\eqref{eq:projection-parareal}.
Algorithm~\ref{alg:projection-parareal} summarizes the method based on
scheme \eqref{eq:projection-parareal}.
\begin{algorithm}[H]
  \caption{Projection-based \emph{parareal}}
  \label{alg:projection-parareal}
  \begin{algorithmic}[1]
    \Require Fine and coarse solvers \(\mathcal F_n,\mathcal G_n\) for
      \(n=0,\ldots,N-1\) and an initial approximation \(\mathbf U^0\) with
      \(\mathbf U_0^0=\mathbf u_0\).
    \For{\(k=0,1,\ldots\)}
      \State Choose an orthogonal projection \(P^k\) and keep it fixed
        for the iteration step.
      \State Compute \(\mathcal F_n(\mathbf U_n^k)\) and
        \(\mathcal H_{P^k,n}(\mathbf U_n^k)\) for \(n=0,\ldots,N-1\)
        in parallel across the coarse intervals.
      \State Set \(\mathbf U_0^{k+1}=\mathbf u_0\) and compute
        \(\mathbf U_{n+1}^{k+1}\) for \(n=0,\ldots,N-1\) sequentially
        from \eqref{eq:projection-parareal}.
    \EndFor
  \end{algorithmic}
\end{algorithm}

Schemes \eqref{eq:direct-split-iteration} and \eqref{eq:projection-parareal}
define two variants of \emph{parareal} with an orthogonal projection.
Both involve the fine-solver value
\(\mathcal F_n(P^k\mathbf U_n^{k+1})\) in the sequential update.
For nonlinear solvers and \(P^k\ne0\), this value in general requires a
fine solve during the sequential update.
This paper analyzes the errors of the two schemes, not their computational
cost.
The construction of the projection space is not the subject of this paper.
Scheme \eqref{eq:projection-parareal} replaces
\(\mathcal S_{P^k,n}\) in \eqref{eq:direct-split-iteration}
by \(\mathcal H_{P^k,n}\).
For \(\mathbf x,P\mathbf x,Q\mathbf x\in\Omega_n\),
\begin{equation}
  \label{eq:exact-form-boundary}
  \mathcal H_{P,n}(\mathbf x)-\mathcal S_{P,n}(\mathbf x)
  =\mathcal G_n(\mathbf x)-\mathcal G_n(P\mathbf x)-\mathcal G_n(Q\mathbf x).
\end{equation}
If \(\mathcal G_n\) is linear, the right-hand side
is the constant \(-\mathcal G_n(\mathbf0)\) and cancels in both updates.
The two schemes then give the same new approximation.
For nonlinear \(\mathcal G_n\), the difference between
\(\mathcal G_n(P\mathbf x)+\mathcal G_n(Q\mathbf x)\) and
\(\mathcal G_n(\mathbf x)\) can depend on \(\mathbf x\).
The two schemes can then give different new approximations.
Section~\ref{sec:form-boundary} analyzes the errors of the two schemes
as \(h\to0\).

\section{Convergence analysis}
\label{sec:projection-dependent-analysis}

Throughout this section, \(N\) and \(h=T/N\) are fixed.
We establish finite-step exactness for schemes
\eqref{eq:direct-split-iteration} and \eqref{eq:projection-parareal}.
We then derive a convergence estimate for
Algorithm~\ref{alg:projection-parareal}.

\subsection{Finite-step exactness}
\label{subsec:finite-step-exactness}

Schemes \eqref{eq:direct-split-iteration} and \eqref{eq:projection-parareal}
reproduce the fine reference solution \(\mathbf u^\star\) on the time grid
after finitely many iteration steps: \(\mathbf U^k=\mathbf u^\star\)
and \(E_\infty^k=0\) for every \(k\ge N\).
We call this property finite-step exactness.
The following lemma makes it precise.
\begin{lemma}
  \leavevmode\phantomsection\label{lem:finite-step-exactness}\ignorespaces
  Let \(k\ge0\), and let \(\mathbf U^k\) satisfy
  \(\mathbf U_j^k=\mathbf u_j^\star\) for \(0\le j\le\min\{k,N\}\).
  Then one admissible step of scheme \eqref{eq:direct-split-iteration} or
  \eqref{eq:projection-parareal} gives
  \(\mathbf U_j^{k+1}=\mathbf u_j^\star\) for \(0\le j\le\min\{k+1,N\}\).
  Consequently, if \(\{\mathbf U^k\}_{k\ge0}\) is generated by admissible
  steps of either scheme from an initial approximation with
  \(\mathbf U_0^0=\mathbf u_0\), then
  \begin{equation}
    \label{eq:exactness-front}
    \mathbf e_j^k=\mathbf0,\qquad
    0\le j\le\min\{k,N\},\qquad k=0,1,\ldots,
  \end{equation}
  and \(E_\infty^k=0\) for every \(k\ge N\).
\end{lemma}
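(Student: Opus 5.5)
The plan is a direct induction over the time index \(j\) for the one-step claim, followed by an induction over \(k\) for the consequence. Fix \(k\ge0\), and suppose \(\mathbf U_j^k=\mathbf u_j^\star\) for \(0\le j\le m:=\min\{k,N\}\). I will show \(\mathbf U_j^{k+1}=\mathbf U_j^k\) for \(0\le j\le m\), and then treat the index \(m+1\) separately when \(m<N\).

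The key observation is that both updates have the form
\[
\mathbf U_{n+1}^{k+1}=\mathcal F_n(\mathbf U_n^k)+\mathcal M_n(\mathbf U_n^{k+1})-\mathcal M_n(\mathbf U_n^k),
\]
where \(\mathcal M_n=\mathcal S_{P^k,n}\) or \(\mathcal M_n=\mathcal H_{P^k,n}\). The only thing I use about \(\mathcal M_n\) is that it is a well-defined function of its argument; this is guaranteed by admissibility, which puts all the required arguments in \(\Omega_n\). Thus, whenever \(\mathbf U_n^{k+1}=\mathbf U_n^k\), the two \(\mathcal M_n\) terms cancel, and we get \(\mathbf U_{n+1}^{k+1}=\mathcal F_n(\mathbf U_n^k)\).

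The inner induction on \(n\) runs as follows.
\begin{itemize}
\item Base case: \(\mathbf U_0^{k+1}=\mathbf u_0=\mathbf u_0^\star\).
\item Inductive step: suppose \(\mathbf U_n^{k+1}=\mathbf u_n^\star\) for some \(n\le m\) with \(n\le N-1\). Since \(n\le m\), the hypothesis gives \(\mathbf U_n^k=\mathbf u_n^\star=\mathbf U_n^{k+1}\). The cancellation then yields
\[
\mathbf U_{n+1}^{k+1}=\mathcal F_n(\mathbf u_n^\star)=\mathbf u_{n+1}^\star
\]
by the definition \eqref{eq:fine-reference}.
\end{itemize}
This propagates exactness from \(n\) to \(n+1\) for every \(n\le\min\{m,N-1\}\). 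Hence it covers all indices \(j\le\min\{m+1,N\}=\min\{k+1,N\}\). Note that the step to \(m+1\) uses only \(\mathbf U_m^k=\mathbf u_m^\star\), not any information about \(\mathbf U_{m+1}^k\).

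For the consequence, I argue by induction on \(k\). At \(k=0\), the hypothesis \(\mathbf U_0^0=\mathbf u_0\) is exactly exactness for \(j\le\min\{0,N\}=0\). The one-step claim then gives \eqref{eq:exactness-front} for all \(k\). For \(k\ge N\), we have \(\min\{k,N\}=N\), so every \(\mathbf e_j^k\) vanishes and \(E_\infty^k=0\).

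There is no real obstacle here. The only point needing care is to verify that the cancellation is legitimate for nonlinear \(\mathcal M_n\): this requires identical arguments rather than any linearity, together with the admissibility assumption ensuring that \(\mathcal M_n\) is evaluated within \(\Omega_n\). I also need to handle the boundary case \(k\ge N\) correctly via the \(\min\).
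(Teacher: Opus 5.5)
Your proposal is correct and follows the paper's proof: an induction on the time index in which the identical arguments \(\mathbf U_n^{k+1}=\mathbf U_n^k=\mathbf u_n^\star\) make the two \(\mathcal H_{P^k,n}\) (or \(\mathcal S_{P^k,n}\)) terms cancel, followed by an induction on \(k\) and the \(\min\{k,N\}\) bookkeeping for \(E_\infty^k=0\) when \(k\ge N\). Your explicit remarks that the cancellation needs no linearity and that admissibility keeps all evaluations inside \(\Omega_n\) are details the paper leaves implicit.
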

\begin{proof}
  We use induction on the time index \(j\).
  The new approximation starts from \(\mathbf U_0^{k+1}=\mathbf u_0\).
  Suppose that \(\mathbf U_j^{k+1}=\mathbf u_j^\star\) for some
  \(0\le j\le\min\{k,N-1\}\).
  The old approximation also satisfies \(\mathbf U_j^k=\mathbf u_j^\star\).
  Substituting \(\mathbf U_j^k=\mathbf U_j^{k+1}=\mathbf u_j^\star\)
  into \eqref{eq:projection-parareal}, we obtain
  \[
    \begin{aligned}
      \mathbf U_{j+1}^{k+1}
      &=\mathcal F_j(\mathbf U_j^k)
        +\mathcal H_{P^k,j}(\mathbf U_j^{k+1})
        -\mathcal H_{P^k,j}(\mathbf U_j^k)\\
      &=\mathcal F_j(\mathbf u_j^\star)
        +\mathcal H_{P^k,j}(\mathbf u_j^\star)
        -\mathcal H_{P^k,j}(\mathbf u_j^\star)\\
      &=\mathcal F_j(\mathbf u_j^\star)=\mathbf u_{j+1}^\star.
    \end{aligned}
  \]
  The same calculation applies to scheme \eqref{eq:direct-split-iteration},
  with \(\mathcal H_{P^k,j}\) replaced by \(\mathcal S_{P^k,j}\).
  This proves exactness through \(T_{\min\{k+1,N\}}\).
  Starting from \(\mathbf U_0^0=\mathbf u_0\), induction on \(k\) yields
  \eqref{eq:exactness-front}.
  The conclusion for \(k\ge N\) follows from
  \(E_\infty^k=\max_{0\le j\le N}\lVert\mathbf e_j^k\rVert_2\).
\end{proof}

\begin{samepage}
The proof uses the same cancellation argument as for classical
\emph{parareal}~\cite{baffico2002parallel}. For \(0\le k<N\), the cancellation in the proof extends exactness
from \(T_0,\ldots,T_k\) to \(T_0,\ldots,T_{k+1}\) in one admissible step.
Thus the fine reference solution \(\mathbf u^\star\) is obtained after at
most \(N\) iteration steps.
For \(0\le k\le N-2\), we call \(T_{k+2}\) the \emph{first potentially
inexact time point} after iteration step \(k\to k+1\) for both
schemes.
\par
\end{samepage}

\subsection{A convergence estimate for one iteration step}
\label{subsec:projection-dependent-one-sweep}

We analyze one iteration step of
Algorithm~\ref{alg:projection-parareal},
which is based on scheme \eqref{eq:projection-parareal}.
Fix an iteration step \(k\to k+1\) of the algorithm with \(0\le k\le N-2\),
and write \(P:=P^k\) and \(Q:=I-P\).
If \(E_\infty^k=0\), then \(E_\infty^{k+1}=0\) by the cancellation
argument in the proof of Lemma~\ref{lem:finite-step-exactness}.
We henceforth assume \(E_\infty^k>0\).
By Lemma~\ref{lem:finite-step-exactness}, \(\mathbf e_n^k=\mathbf0\) for
\(0\le n\le k\).
Thus the old error \(\mathbf e_n^k\) can enter the update only through
the intervals indexed by
\begin{equation}
  \label{eq:active-set}
  \mathcal A_k:=\{k+1,\ldots,N-1\}.
\end{equation}
For a differentiable map from an open subset of \(\mathbb R^d\)
into \(\mathbb R^d\), we write \(\mathrm D\) for its Jacobian.
For matrices, \(\lVert\cdot\rVert_2\) denotes the induced Euclidean norm.

Let \(\mathcal D_n:=\mathcal F_n-\mathcal G_n\) denote the discrepancy of
the two solvers.
Then \(\mathcal H_{P,n}=\mathcal G_n+\mathcal D_n\circ P\) by
\eqref{eq:discrepancy-form-propagator}.
For \(\mathbf x,P\mathbf x\in\Omega_n\), define the residual
\begin{equation}
  \label{eq:unresolved-residual}
  \mathcal R_{P,n}(\mathbf x)
  :=\mathcal F_n(\mathbf x)-\mathcal H_{P,n}(\mathbf x)
  =\mathcal D_n(\mathbf x)-\mathcal D_n(P\mathbf x).
\end{equation}
For \(n\in\mathcal A_k\) and \(0\le\tau\le1\), define
\[
  \mathbf z_n^k(\tau):=\mathbf U_n^k+\tau\mathbf e_n^k,\qquad
  \boldsymbol\zeta_n^{k+1}(\tau)
  :=\mathbf U_n^{k+1}+\tau\mathbf e_n^{k+1}.
\]
Then
\(\mathbf z_n^k(1)=\boldsymbol\zeta_n^{k+1}(1)=\mathbf u_n^\star\).
We impose the following assumption.
\begin{assumption}
  \label{ass:local-pathwise-regularity}
  For each \(n\in\mathcal A_k\), there is an open set
  \(\mathcal O_{n,k}\subseteq\Omega_n\) containing
  \[
    \{\mathbf z_n^k(\tau),\,P\mathbf z_n^k(\tau),\,
      \boldsymbol\zeta_n^{k+1}(\tau),\,
      P\boldsymbol\zeta_n^{k+1}(\tau):0\le\tau\le1\}.
  \]
  The fine and coarse solvers \(\mathcal F_n,\mathcal G_n\) belong to
  \(C^1(\mathcal O_{n,k};\mathbb R^d)\).
  There are finite constants \(\mu_k,\nu_k\ge0\) such that
  \begin{equation}
    \label{eq:source-path-bounds}
    \begin{aligned}
      \lVert\mathrm D\mathcal D_n(\mathbf z_n^k(\tau))\rVert_2
      &\le\mu_k,\\
      \lVert\mathrm D\mathcal D_n(\mathbf z_n^k(\tau))
        -\mathrm D\mathcal D_n(P\mathbf z_n^k(\tau))\rVert_2
      &\le\nu_k\lVert Q\mathbf z_n^k(\tau)\rVert_2,
    \end{aligned}
  \end{equation}
  for \(n\in\mathcal A_k\) and \(0\le\tau\le1\).
\end{assumption}

The constants \(\mu_k\) and \(\nu_k\) in \eqref{eq:source-path-bounds} are
local to this step and may depend on the grid, on \(P\), and on the old
approximation \(\mathbf U^k\). On the open set
\(\{\mathbf x\in\mathcal O_{n,k}:P\mathbf x\in\mathcal O_{n,k}\}\), the
chain rule gives
\[
  \mathrm D\mathcal H_{P,n}(\mathbf x)
  =\mathrm D\mathcal G_n(\mathbf x)
   +\mathrm D\mathcal D_n(P\mathbf x)P.
\]
This set contains \(\boldsymbol\zeta_n^{k+1}(\tau)\) for
\(0\le\tau\le1\).
Define the averaged Jacobian along \(\boldsymbol\zeta_n^{k+1}\) by
\begin{equation}
  \label{eq:realized-secant}
  B_n^{k+1}
  :=\int_0^1\mathrm D\mathcal H_{P,n}
     (\boldsymbol\zeta_n^{k+1}(\tau))\,d\tau,
  \qquad n\in\mathcal A_k.
\end{equation}
For \(k+1\le\ell\le m\le N\), set
\begin{equation}
  \label{eq:ordered-products}
  \Phi_{m,\ell}^{k+1}
  :=\begin{cases}
     B_{m-1}^{k+1}B_{m-2}^{k+1}\cdots B_\ell^{k+1},&m>\ell,\\
     I,&m=\ell.
    \end{cases}
\end{equation}
In \(\Phi_{m,\ell}^{k+1}\), the factors \(B_n^{k+1}\) act in temporal
order from right to left.
We define the propagation factor by
\begin{equation}
  \label{eq:causal-gain}
  \gamma_k
  :=\max_{k+2\le m\le N}
     \sum_{j=k+1}^{m-1}\lVert\Phi_{m,j+1}^{k+1}\rVert_2.
\end{equation}
Continuity of \(\mathrm D\mathcal H_{P,n}\) on the compact sets
\(\boldsymbol\zeta_n^{k+1}([0,1])\) and finiteness of \(\mathcal A_k\)
imply \(\gamma_k<\infty\).
The identity term \(\Phi_{m,m}^{k+1}=I\) gives \(\gamma_k\ge1\).
The factor \(\gamma_k\) in \eqref{eq:causal-gain} is an a posteriori
quantity, determined by \(P\), the new approximation
\(\mathbf U^{k+1}\), and the fine reference solution \(\mathbf u^\star\).
Assumption~\ref{ass:local-pathwise-regularity} provides no a priori bound
for \(\gamma_k\) that is uniform in the grid or the iteration step.

Define the ratios
\[
  \theta_Q^k(P)
  :=\frac{\max_{j\in\mathcal A_k}\lVert Q\mathbf e_j^k\rVert_2}
          {E_\infty^k},\qquad
  \theta_P^k(P)
  :=\frac{\max_{j\in\mathcal A_k}\lVert P\mathbf e_j^k\rVert_2}
          {E_\infty^k}.
\]
Set
\[
  U_k^\perp(P):=\max_{j\in\mathcal A_k}\lVert Q\mathbf U_j^k\rVert_2.
\]
By orthogonality, \(0\le\theta_Q^k(P)\le1\) and
\(0\le\theta_P^k(P)\le1\).
The maxima defining the numerators of \(\theta_Q^k(P)\) and
\(\theta_P^k(P)\) are taken over \(\mathcal A_k\), whereas the maximum
defining \(E_\infty^k\) also includes \(T_N\).
The following theorem bounds the new error \(E_\infty^{k+1}\)
in terms of the old error \(E_\infty^k\), with a factor that depends on
the projection \(P\).
\begin{theorem}
  \label{lem:projection-dependent-one-sweep}
  Let \(0\le k\le N-2\), and let \(\mathbf U^0,\ldots,\mathbf U^{k+1}\) be
  generated by admissible steps of Algorithm~\ref{alg:projection-parareal}.
  Set \(P:=P^k\) and \(Q:=I-P\).
  If \(E_\infty^k>0\) and Assumption~\ref{ass:local-pathwise-regularity}
  holds for the iteration step \(k\to k+1\), then
  \begin{equation}
    \label{eq:nonlinear-one-sweep-estimate}
    E_\infty^{k+1}\le\rho_k(P)E_\infty^k,
  \end{equation}
  where
  \begin{equation}
    \label{eq:nonlinear-one-sweep-factor}
    \rho_k(P):=\gamma_k\left[
         \mu_k\theta_Q^k(P)
         +\nu_kU_k^\perp(P)\theta_P^k(P)
         +\frac{\nu_k}{2}\theta_Q^k(P)\theta_P^k(P)E_\infty^k
       \right].
  \end{equation}
\end{theorem}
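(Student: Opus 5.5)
We derive an error recursion from scheme \eqref{eq:projection-parareal}, unroll it with the products \(\Phi^{k+1}_{m,\ell}\), and bound each source term using Assumption~\ref{ass:local-pathwise-regularity}.

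\emph{Error recursion.} Subtract the update from \(\mathbf u_{n+1}^\star=\mathcal F_n(\mathbf u_n^\star)\). Using \(\mathcal F_n=\mathcal H_{P,n}+\mathcal R_{P,n}\) from \eqref{eq:unresolved-residual}, we get
\[
  \mathbf e_{n+1}^{k+1}
  =\bigl[\mathcal H_{P,n}(\mathbf u_n^\star)-\mathcal H_{P,n}(\mathbf U_n^{k+1})\bigr]
   +\bigl[\mathcal R_{P,n}(\mathbf u_n^\star)-\mathcal R_{P,n}(\mathbf U_n^k)\bigr].
\]
For \(n\in\mathcal A_k\) the segment \(\boldsymbol\zeta_n^{k+1}([0,1])\) lies in the open set where \(\mathcal H_{P,n}\) is \(C^1\). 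The fundamental theorem of calculus then turns the first bracket into \(B_n^{k+1}\mathbf e_n^{k+1}\). Call the second bracket \(\mathbf r_n\). For \(n\le k\) both \(\mathbf e_n^{k}\) and \(\mathbf e_n^{k+1}\) vanish by Lemma~\ref{lem:finite-step-exactness}, so the recursion starts with \(\mathbf e_{k+1}^{k+1}=\mathbf0\). Unrolling gives, for \(k+2\le m\le N\),
\[
  \mathbf e_m^{k+1}=\sum_{j=k+1}^{m-1}\Phi_{m,j+1}^{k+1}\mathbf r_j .
\]
Hence \(E_\infty^{k+1}\le\gamma_k\max_{j\in\mathcal A_k}\lVert\mathbf r_j\rVert_2\). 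It remains to show \(\lVert\mathbf r_j\rVert_2\le\rho_k(P)E_\infty^k/\gamma_k\).

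\emph{Bounding the source.} Write \(\mathbf r_j=\mathcal D_j(\mathbf u_j^\star)-\mathcal D_j(\mathbf U_j^k)-[\mathcal D_j(P\mathbf u_j^\star)-\mathcal D_j(P\mathbf U_j^k)]\), and integrate along \(\mathbf z_j^k(\tau)\) and \(P\mathbf z_j^k(\tau)\), both of which lie in \(\mathcal O_{j,k}\). With \(\mathbf e=\mathbf e_j^k=P\mathbf e+Q\mathbf e\),
\[
  \mathbf r_j=\int_0^1\Bigl[\mathrm D\mathcal D_j(\mathbf z(\tau))\,Q\mathbf e
   +\bigl(\mathrm D\mathcal D_j(\mathbf z(\tau))-\mathrm D\mathcal D_j(P\mathbf z(\tau))\bigr)P\mathbf e\Bigr]d\tau .
\]
The first term is bounded by \(\mu_k\lVert Q\mathbf e\rVert_2\le\mu_k\theta_Q^kE_\infty^k\). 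For the second term, the Lipschitz-type bound in \eqref{eq:source-path-bounds} gives \(\nu_k\lVert Q\mathbf z(\tau)\rVert_2\lVert P\mathbf e\rVert_2\). Since \(\lVert Q\mathbf z(\tau)\rVert_2\le\lVert Q\mathbf U_j^k\rVert_2+\tau\lVert Q\mathbf e\rVert_2\), and \(\int_0^1\tau\,d\tau=\tfrac12\), the second term is at most
\[
  \nu_k\bigl(U_k^\perp+\tfrac12\theta_Q^kE_\infty^k\bigr)\theta_P^kE_\infty^k .
\]
Adding the two bounds and multiplying by \(\gamma_k\) yields exactly \(\rho_k(P)E_\infty^k\).

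\emph{Expected difficulty.} The delicate step is the splitting of \(\mathbf r_j\). One must insert and subtract \(\mathrm D\mathcal D_j(\mathbf z)P\mathbf e\), so that the \(Q\)-part carries the \(\mu_k\) bound and the \(P\)-part carries the Jacobian difference. This is what produces the \(\tfrac12\) factor through the linear growth of \(Q\mathbf z(\tau)\) in \(\tau\). The remaining care is bookkeeping. We must check that every point used in the integrations lies in \(\mathcal O_{n,k}\), which holds by admissibility and the assumption. We must also check that the index ranges in the unrolled sum match the definition of \(\gamma_k\) in \eqref{eq:causal-gain}, with \(m=k+1\) trivial because \(\mathbf e_{k+1}^{k+1}=\mathbf0\).
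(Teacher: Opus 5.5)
Your proposal is correct and follows essentially the same route as the paper's proof. Both arguments:
\begin{enumerate}
\item derive the error identity from \(\mathcal F_n=\mathcal H_{P,n}+\mathcal R_{P,n}\);
\item use the averaged Jacobian \(B_n^{k+1}\) along \(\boldsymbol\zeta_n^{k+1}\) and forward substitution to get \(E_\infty^{k+1}\le\gamma_k\max_{j\in\mathcal A_k}\lVert\mathbf s_{j+1}^k\rVert_2\);
\item bound the source by the same splitting \(\mathbf e_n^k=P\mathbf e_n^k+Q\mathbf e_n^k\), with the factor \(\tfrac12\) coming from \(Q\mathbf z_n^k(\tau)=Q\mathbf U_n^k+\tau Q\mathbf e_n^k\).
\end{enumerate}
Your \(\mathbf r_j\) is the paper's \(\mathbf s_{j+1}^k\). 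The paper later uses \(\mathbf r_n\) for the coarse-solver remainder, so a different symbol would avoid a notational clash.
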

\begin{proof}
  By Assumption~\ref{ass:local-pathwise-regularity} at \(\tau=1\),
  \(\mathbf u_n^\star,P\mathbf u_n^\star\in\Omega_n\) for \(n\in\mathcal A_k\).
  Subtracting \eqref{eq:projection-parareal} from \eqref{eq:fine-reference}
  and using \(\mathcal F_n=\mathcal H_{P,n}+\mathcal R_{P,n}\), we obtain
  \begin{equation}
    \label{eq:corrected-error-identity}
    \begin{aligned}
      \mathbf e_{n+1}^{k+1}
      &=\mathcal H_{P,n}(\mathbf u_n^\star)
        -\mathcal H_{P,n}(\mathbf U_n^{k+1})+\mathcal R_{P,n}(\mathbf u_n^\star)
        -\mathcal R_{P,n}(\mathbf U_n^k),
      \qquad n\in\mathcal A_k.
    \end{aligned}
  \end{equation}
  By \eqref{eq:realized-secant} and the fundamental theorem of calculus
  along \(\boldsymbol\zeta_n^{k+1}\),
  \[
    \mathcal H_{P,n}(\mathbf u_n^\star)
     -\mathcal H_{P,n}(\mathbf U_n^{k+1})
    =B_n^{k+1}\mathbf e_n^{k+1}.
  \]
  For \(n\in\mathcal A_k\), define
  \begin{equation}
    \label{eq:old-state-source}
    \mathbf s_{n+1}^k
    :=\mathcal R_{P,n}(\mathbf u_n^\star)
      -\mathcal R_{P,n}(\mathbf U_n^k).
  \end{equation}
  By these identities and Lemma~\ref{lem:finite-step-exactness}, the
  new errors satisfy
  \begin{equation}
    \label{eq:causal-error-recurrence}
    \mathbf e_{n+1}^{k+1}
    =B_n^{k+1}\mathbf e_n^{k+1}+\mathbf s_{n+1}^k,
    \qquad n\in\mathcal A_k,\qquad
    \mathbf e_{k+1}^{k+1}=\mathbf0.
  \end{equation}
  Forward substitution gives
  \begin{equation}
    \label{eq:active-green-representation}
    \mathbf e_m^{k+1}
    =\sum_{j=k+1}^{m-1}
        \Phi_{m,j+1}^{k+1}\mathbf s_{j+1}^k,
    \qquad m=k+2,\ldots,N.
  \end{equation}
  For \(m=k+2,\ldots,N\), the indices \(j=k+1,\ldots,m-1\) in
  \eqref{eq:active-green-representation} lie in \(\mathcal A_k\), and taking
  norms yields
  \[
    \begin{aligned}
      \lVert\mathbf e_m^{k+1}\rVert_2
      &\le\sum_{j=k+1}^{m-1}\lVert\Phi_{m,j+1}^{k+1}\rVert_2
         \lVert\mathbf s_{j+1}^k\rVert_2\\
      &\le\Bigl(\sum_{j=k+1}^{m-1}\lVert\Phi_{m,j+1}^{k+1}\rVert_2\Bigr)
         \max_{j\in\mathcal A_k}\lVert\mathbf s_{j+1}^k\rVert_2\\
      &\le\gamma_k\max_{j\in\mathcal A_k}\lVert\mathbf s_{j+1}^k\rVert_2,
    \end{aligned}
  \]
  where the last inequality uses the definition \eqref{eq:causal-gain} of
  \(\gamma_k\).
  Since \(\mathbf e_m^{k+1}=\mathbf0\) for \(0\le m\le k+1\) by
  Lemma~\ref{lem:finite-step-exactness}, taking the maximum over
  \(0\le m\le N\) gives
  \begin{equation}
    \label{eq:causal-source-bound}
    E_\infty^{k+1}
    \le\gamma_k\max_{j\in\mathcal A_k}\lVert\mathbf s_{j+1}^k\rVert_2.
  \end{equation}

  To estimate \(\mathbf s_{n+1}^k\), use \eqref{eq:old-state-source},
  \eqref{eq:unresolved-residual}, and the fundamental theorem of calculus
  along \(\mathbf z_n^k\) and \(P\mathbf z_n^k\):
  \[
   \begin{aligned}
    \mathbf s_{n+1}^k
    &=\int_0^1\left[
        \mathrm D\mathcal D_n(\mathbf z_n^k(\tau))\mathbf e_n^k
        -\mathrm D\mathcal D_n(P\mathbf z_n^k(\tau))P\mathbf e_n^k
       \right]\,d\tau\\
    &=\int_0^1\mathrm D\mathcal D_n(\mathbf z_n^k(\tau))
                     Q\mathbf e_n^k\,d\tau+\int_0^1\left[
        \mathrm D\mathcal D_n(\mathbf z_n^k(\tau))
        -\mathrm D\mathcal D_n(P\mathbf z_n^k(\tau))
       \right]P\mathbf e_n^k\,d\tau.
   \end{aligned}
  \]
  By \eqref{eq:source-path-bounds}, the integral containing
  \(Q\mathbf e_n^k\) has norm at most \(\mu_k\lVert Q\mathbf e_n^k\rVert_2\).
  For the second integral, the second inequality in
  \eqref{eq:source-path-bounds}, the identity
  \(Q\mathbf z_n^k(\tau)=Q\mathbf U_n^k+\tau Q\mathbf e_n^k\), and the triangle
  inequality imply
  \[
    \begin{aligned}
    &\Bigl\lVert\int_0^1\bigl[\mathrm D\mathcal D_n(\mathbf z_n^k(\tau))
       -\mathrm D\mathcal D_n(P\mathbf z_n^k(\tau))\bigr]P\mathbf e_n^k\,d\tau
     \Bigr\rVert_2\\
    \qquad&\le\nu_k\lVert P\mathbf e_n^k\rVert_2
       \int_0^1\lVert Q\mathbf z_n^k(\tau)\rVert_2\,d\tau\\
    \qquad&\le\nu_k\lVert P\mathbf e_n^k\rVert_2
       \int_0^1\bigl(\lVert Q\mathbf U_n^k\rVert_2
          +\tau\lVert Q\mathbf e_n^k\rVert_2\bigr)\,d\tau\\
    \qquad&=\nu_k\Bigl(\lVert Q\mathbf U_n^k\rVert_2
         +\frac12\lVert Q\mathbf e_n^k\rVert_2\Bigr)
       \lVert P\mathbf e_n^k\rVert_2 .
    \end{aligned}
  \]
  By the triangle inequality and the bounds on the two integrals, we
  have
  \begin{equation}
    \label{eq:three-term-source-bound}
    \begin{aligned}
     \lVert\mathbf s_{n+1}^k\rVert_2
     &\le\mu_k\lVert Q\mathbf e_n^k\rVert_2+\nu_k\lVert Q\mathbf U_n^k\rVert_2
                      \lVert P\mathbf e_n^k\rVert_2+\frac{\nu_k}{2}\lVert Q\mathbf e_n^k\rVert_2
                      \lVert P\mathbf e_n^k\rVert_2.
    \end{aligned}
  \end{equation}
  Taking the maximum over \(n\in\mathcal A_k\) in
  \eqref{eq:three-term-source-bound} and using the definitions of
  \(\theta_Q^k(P)\), \(\theta_P^k(P)\), and \(U_k^\perp(P)\), we obtain
  \begin{equation}
    \label{eq:normalized-source-bound}
    \max_{j\in\mathcal A_k}\lVert\mathbf s_{j+1}^k\rVert_2
    \le\bigl[
      \mu_k\theta_Q^k(P)
      +\nu_kU_k^\perp(P)\theta_P^k(P)
      +\tfrac{\nu_k}{2}\theta_Q^k(P)\theta_P^k(P)E_\infty^k
    \bigr]E_\infty^k.
  \end{equation}
  Combining \eqref{eq:normalized-source-bound} with
  \eqref{eq:causal-source-bound} proves
  \eqref{eq:nonlinear-one-sweep-estimate}.
\end{proof}

The three terms in the bracket of \eqref{eq:nonlinear-one-sweep-factor}
show how the projection affects the iteration step \(k\to k+1\).
The first term \(\mu_k\theta_Q^k(P)\) measures the relative size of the
components \(Q\mathbf e_j^k\) outside the range of \(P\), weighted by
\(\mu_k\).
The second term \(\nu_kU_k^\perp(P)\theta_P^k(P)\) contains the largest
distance \(U_k^\perp(P)\) from the old approximations \(\mathbf U_j^k\) with
\(j\in\mathcal A_k\) to the range of \(P\).
It vanishes when \(P\mathbf U_j^k=\mathbf U_j^k\) for all \(j\in\mathcal A_k\).
The third term \(\tfrac{\nu_k}{2}\theta_Q^k(P)\theta_P^k(P)E_\infty^k\)
carries an additional factor \(E_\infty^k\), so its contribution to
\(E_\infty^{k+1}\) is quadratic in \(E_\infty^k\).
All three terms are multiplied by the propagation factor \(\gamma_k\),
which also depends on \(P\).

We next relate \(\mu_k\) and \(\nu_k\) in
\eqref{eq:source-path-bounds} to the accuracy of the solvers.
For sufficiently smooth problems, Runge--Kutta methods approximate
the solution and its derivative with respect to the initial value with the
same local order~\cite[Sec.~3]{gimeno2023variational}.
For such solvers with coarse order \(p\ge1\) and fine order at least \(p\),
comparison with the same exact local solution map shows that
\(\lVert\mathrm D\mathcal D_n\rVert_2=O(h^{p+1})\).
When this bound holds uniformly along \(\mathbf z_n^k(\tau)\) for
\(n\in\mathcal A_k\) and \(0\le\tau\le1\) with a constant
\(C_{\mathcal D}\) independent of \(h\), one may take
\(\mu_k=C_{\mathcal D}h^{p+1}\) in \eqref{eq:source-path-bounds}.
The constant \(\nu_k\) in~\eqref{eq:source-path-bounds} bounds the change
of \(\mathrm D\mathcal D_n\) between \(P\mathbf z_n^k(\tau)\) and
\(\mathbf z_n^k(\tau)\), so it involves
the second derivative of \(\mathcal D_n\), which the bound on
\(\mathrm D\mathcal D_n\) does not control.
For Runge--Kutta methods, the second derivative with respect to the initial
value is obtained by applying the same method to the second-order variational
equations~\cite[Thm.~3.1]{gimeno2023variational}.
The same comparison shows that
\(\lVert\mathrm D^2\mathcal D_n\rVert_2=O(h^{p+1})\), with the norm of
bilinear maps induced by \(\lVert\cdot\rVert_2\).
If each \(\mathcal D_n\) with \(n\in\mathcal A_k\) is \(C^2\)
on an open subset of \(\Omega_n\)
containing the segments joining \(P\mathbf z_n^k(\tau)\) and
\(\mathbf z_n^k(\tau)\) for \(0\le\tau\le1\), the mean value inequality yields
\[
  \bigl\lVert\mathrm D\mathcal D_n(\mathbf z_n^k(\tau))
     -\mathrm D\mathcal D_n(P\mathbf z_n^k(\tau))\bigr\rVert_2
  \le\max_{0\le s\le1}
     \bigl\lVert\mathrm D^2\mathcal D_n\bigl(P\mathbf z_n^k(\tau)
       +sQ\mathbf z_n^k(\tau)\bigr)\bigr\rVert_2
     \lVert Q\mathbf z_n^k(\tau)\rVert_2,
\]
for \(n\in\mathcal A_k\) and \(0\le\tau\le1\).
The second inequality in~\eqref{eq:source-path-bounds} holds with
\(\nu_k\) chosen as the maximum of the second-derivative norms above over
\(n\in\mathcal A_k\) and \(0\le\tau,s\le1\).
Hence \(\nu_k\) can be chosen as \(O(h^{p+1})\) when the bound on
\(\mathrm D^2\mathcal D_n\) is uniform on all these segments with a constant
independent of \(h\).
The following corollary applies
Theorem~\ref{lem:projection-dependent-one-sweep} to particular projections
with \(\mu_k=C_{\mathcal D}h^{p+1}\) and does not require an order for \(\nu_k\).

\begin{corollary}
  \label{cor:one-step-special-cases}
  Suppose that the assumptions of Theorem~\ref{lem:projection-dependent-one-sweep}
  hold with \(\mu_k=C_{\mathcal D}h^{p+1}\), where \(p\ge1\) is an integer
  and \(C_{\mathcal D}\ge0\) is a constant.

  \ivpresultpart\label{cor:full-projection} If \(P=I\), then
  \(\mathbf U^{k+1}=\mathbf u^\star\) and \(E_\infty^{k+1}=0\).

  \ivpresultpart\label{cor:zero-projection} If \(P=0\), then
  \[
    E_\infty^{k+1}
    \le\gamma_k C_{\mathcal D}h^{p+1}
      \max_{j\in\mathcal A_k}\lVert\mathbf e_j^k\rVert_2
    \le\gamma_k C_{\mathcal D}h^{p+1}E_\infty^k.
  \]

  \ivpresultpart\label{cor:old-state-projection}
  If \(P\mathbf U_n^k=\mathbf U_n^k\) for all
  \(n\in\mathcal A_k\), then
  \[
    E_\infty^{k+1}
    \le\gamma_k\theta_Q^k(P)\Bigl(C_{\mathcal D}h^{p+1}
      +\frac{\nu_k}{2}E_\infty^k\Bigr)E_\infty^k.
  \]
\end{corollary}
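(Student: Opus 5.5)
All three parts follow by specializing Theorem~\ref{lem:projection-dependent-one-sweep}, that is, the bound $E_\infty^{k+1}\le\rho_k(P)E_\infty^k$, with $\mu_k=C_{\mathcal D}h^{p+1}$. In each case we identify how $\theta_Q^k(P)$, $\theta_P^k(P)$ and $U_k^\perp(P)$ simplify. If $E_\infty^k=0$, all three claims are immediate from the cancellation argument of Lemma~\ref{lem:finite-step-exactness}, which gives $E_\infty^{k+1}=0$. We therefore assume $E_\infty^k>0$ throughout.

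For part~\ref{cor:full-projection} we do not need the theorem's bound at all; a direct argument is cleaner. With $P=I$ we have $\mathcal H_{I,n}(\mathbf x)=\mathcal G_n(\mathbf x)+\mathcal F_n(\mathbf x)-\mathcal G_n(\mathbf x)=\mathcal F_n(\mathbf x)$. The update \eqref{eq:projection-parareal} then reads $\mathbf U_{n+1}^{k+1}=\mathcal F_n(\mathbf U_n^k)+\mathcal F_n(\mathbf U_n^{k+1})-\mathcal F_n(\mathbf U_n^k)=\mathcal F_n(\mathbf U_n^{k+1})$, with $\mathbf U_0^{k+1}=\mathbf u_0$. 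This is exactly the recurrence \eqref{eq:fine-reference}, so induction on $n$ gives $\mathbf U^{k+1}=\mathbf u^\star$. Alternatively, $Q=0$ gives $\theta_Q^k=0$ and $U_k^\perp=0$, so all three terms of $\rho_k(I)$ vanish. The only care needed is that the evaluations of $\mathcal F_n$ are well defined, which is admissibility.

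For part~\ref{cor:zero-projection}, take $P=0$. Then $P\mathbf e_j^k=\mathbf0$, so $\theta_P^k(0)=0$ and the second and third terms of $\rho_k$ vanish. Also $Q=I$, so $\theta_Q^k(0)E_\infty^k=\max_{j\in\mathcal A_k}\lVert\mathbf e_j^k\rVert_2$. Substituting into \eqref{eq:nonlinear-one-sweep-estimate} gives the first inequality, namely $E_\infty^{k+1}\le\gamma_k C_{\mathcal D}h^{p+1}\max_{j\in\mathcal A_k}\lVert\mathbf e_j^k\rVert_2$. The second inequality follows from $\max_{j\in\mathcal A_k}\lVert\mathbf e_j^k\rVert_2\le E_\infty^k$.

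For part~\ref{cor:old-state-projection}, the hypothesis gives $Q\mathbf U_n^k=\mathbf0$ for every $n\in\mathcal A_k$, so $U_k^\perp(P)=0$ and the middle term drops out. It remains to bound the third term using $\theta_P^k(P)\le1$, which holds by orthogonality. This gives $\rho_k(P)\le\gamma_k\theta_Q^k(P)\bigl(C_{\mathcal D}h^{p+1}+\tfrac{\nu_k}{2}E_\infty^k\bigr)$, and the stated estimate follows. No step is a real obstacle. The only point needing any thought is part~\ref{cor:full-projection}, where the direct recurrence argument avoids any question of whether $\gamma_k$ and Assumption~\ref{ass:local-pathwise-regularity} are meaningful in that degenerate case.
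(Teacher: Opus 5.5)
Your proposal is correct and follows the paper's proof essentially verbatim for parts (ii) and (iii): you identify $\theta_P^k(0)=0$, $\theta_Q^k(0)E_\infty^k=\max_{j\in\mathcal A_k}\lVert\mathbf e_j^k\rVert_2$ and $U_k^\perp(P)=0$, and then use $\theta_P^k(P)\le1$. For part (i) the paper uses your alternative route ($Q=0$ forces $\rho_k(I)=0$), while your primary argument, which reduces the update to $\mathbf U_{n+1}^{k+1}=\mathcal F_n(\mathbf U_n^{k+1})$ via $\mathcal H_{I,n}=\mathcal F_n$, is also valid and needs only admissibility, not Assumption~\ref{ass:local-pathwise-regularity} or $E_\infty^k>0$.
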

\begin{proof}
  For \(P=I\), \(Q=0\), so \(\theta_Q^k(I)=U_k^\perp(I)=0\).
  Hence \(\rho_k(I)=0\), and
  \eqref{eq:nonlinear-one-sweep-estimate} implies
  Corollary~\ref{cor:full-projection}.

  For \(P=0\), \(Q=I\), so \(\theta_P^k(0)=0\) and
  \(\theta_Q^k(0)E_\infty^k=\max_{j\in\mathcal A_k}\lVert\mathbf e_j^k\rVert_2
  \le E_\infty^k\).
  With \(\mu_k=C_{\mathcal D}h^{p+1}\), \eqref{eq:nonlinear-one-sweep-factor}
  reduces to \(\rho_k(0)=\gamma_kC_{\mathcal D}h^{p+1}\theta_Q^k(0)\).
  By \eqref{eq:nonlinear-one-sweep-estimate} and the identity
  \(\theta_Q^k(0)E_\infty^k
    =\max_{j\in\mathcal A_k}\lVert\mathbf e_j^k\rVert_2\),
  \[
    \begin{aligned}
      E_\infty^{k+1}&\le\rho_k(0)E_\infty^k\\
      &=\gamma_kC_{\mathcal D}h^{p+1}
        \max_{j\in\mathcal A_k}\lVert\mathbf e_j^k\rVert_2\\
      &\le\gamma_kC_{\mathcal D}h^{p+1}E_\infty^k.
    \end{aligned}
  \]
  This proves Corollary~\ref{cor:zero-projection}.

  For Corollary~\ref{cor:old-state-projection}, the identities
  \(Q\mathbf U_n^k=\mathbf0\) for \(n\in\mathcal A_k\) imply
  \(U_k^\perp(P)=0\) and \(Q\mathbf e_j^k=Q\mathbf u_j^\star\) for
  \(j\in\mathcal A_k\).
  Hence
  \[
    \theta_Q^k(P)E_\infty^k
    =\max_{j\in\mathcal A_k}\lVert Q\mathbf u_j^\star\rVert_2.
  \]
  With \(\mu_k=C_{\mathcal D}h^{p+1}\) and \(U_k^\perp(P)=0\),
  \eqref{eq:nonlinear-one-sweep-factor} reduces to
  \[
    \begin{aligned}
      \rho_k(P)
      &=\gamma_k\theta_Q^k(P)\Bigl(C_{\mathcal D}h^{p+1}
         +\frac{\nu_k}{2}\theta_P^k(P)E_\infty^k\Bigr)\\
      &\le\gamma_k\theta_Q^k(P)\Bigl(C_{\mathcal D}h^{p+1}
         +\frac{\nu_k}{2}E_\infty^k\Bigr),
    \end{aligned}
  \]
  where the inequality uses \(\theta_P^k(P)\le1\) and
  \(\gamma_k\theta_Q^k(P)\nu_kE_\infty^k\ge0\).
  Theorem~\ref{lem:projection-dependent-one-sweep} then proves
  Corollary~\ref{cor:old-state-projection}.
  By the identity for \(\theta_Q^k(P)E_\infty^k\), the right-hand side of the
  estimate in Corollary~\ref{cor:old-state-projection} equals
  \(\gamma_k\bigl(C_{\mathcal D}h^{p+1}+\tfrac{\nu_k}{2}E_\infty^k\bigr)
  \max_{j\in\mathcal A_k}\lVert Q\mathbf u_j^\star\rVert_2\).
\end{proof}

Theorem~\ref{lem:projection-dependent-one-sweep} provides a projection-dependent
convergence estimate with the factor \(\rho_k(P)\) in
\eqref{eq:nonlinear-one-sweep-factor}.
For \(P^k=0\), the map \(\mathcal H_{0,n}\) differs from \(\mathcal G_n\) by
the constant \(\mathcal D_n(\mathbf0)\), so scheme
\eqref{eq:projection-parareal} is the classical iteration
\eqref{eq:classical-parareal}.
For \(P=0\), Corollary~\ref{cor:zero-projection} gives
an estimate for classical nonlinear \emph{parareal} with the same separation
of error generation and propagation as in~\cite[Thm.~1 and its proof]{gander2008nonlinear}.
A related factorization of the nonlinear iteration Jacobian is given
in~\cite[Thm.~4.1 and Eq.~(4.4)]{antonucci2026sources}.

\section{Asymptotic error analysis}
\label{sec:form-boundary}

Section~\ref{sec:projection-dependent-analysis} fixes \(h\) and
analyzes the error as the iteration proceeds.
This section analyzes the errors produced by one step of schemes
\eqref{eq:direct-split-iteration} and \eqref{eq:projection-parareal}
as \(h\to0\).
Let \(T>0\) and \(h_0>0\) be fixed.
Throughout this section, we consider a family of uniform grids on
\([0,T]\) along which \(h\to0\).
Each grid of the family has the form
\begin{equation}
  \label{eq:uniform-grid-family}
  T_n=nh\ (n=0,\ldots,N),\qquad h=\frac TN,\qquad 0<h\le h_0,
\end{equation}
with \(N\ge2\) an integer.
On each grid of the family in \eqref{eq:uniform-grid-family},
the fine reference solution \(\mathbf u^\star\) in
\eqref{eq:fine-reference} is assumed well defined.
The indices, projections, approximations, and derived quantities may depend
on the grid.
This dependence is suppressed in the notation.

Let \(P\) be an orthogonal projection and set \(Q=I-P\).
For an admissible step of scheme \eqref{eq:projection-parareal}
with \(P^k=P\), the subtraction used in
\eqref{eq:corrected-error-identity} yields
\[
  \begin{aligned}
    \mathbf e_{n+1}^{k+1}
    &=\mathcal H_{P,n}(\mathbf u_n^\star)
      -\mathcal H_{P,n}(\mathbf U_n^{k+1})+\mathcal R_{P,n}(\mathbf u_n^\star)
      -\mathcal R_{P,n}(\mathbf U_n^k)
  \end{aligned}
\]
for \(0\le n<N\) such that \(P\mathbf u_n^\star\in\Omega_n\).
In this identity, the difference
\(\mathcal H_{P,n}(\mathbf u_n^\star)-\mathcal H_{P,n}(\mathbf U_n^{k+1})\)
propagates the new error \(\mathbf e_n^{k+1}\) to \(T_{n+1}\).
The remaining term
\(\mathcal R_{P,n}(\mathbf u_n^\star)-\mathcal R_{P,n}(\mathbf U_n^k)\)
gives the contribution generated by the old error \(\mathbf e_n^k\).
We estimate these differences to bound
\(\lVert\mathbf e_{n+1}^{k+1}\rVert_2\) in terms of \(h\).

We next collect the notation used in the analysis.
For \(\mathbf y,P\mathbf y\in\Omega_n\), the residual
\eqref{eq:unresolved-residual} of scheme \eqref{eq:projection-parareal} satisfies
\[
  \mathcal R_{P,n}(\mathbf y)
  =\mathcal F_n(\mathbf y)-\mathcal H_{P,n}(\mathbf y)
  =\mathcal D_n(\mathbf y)-\mathcal D_n(P\mathbf y).
\]
Here \(\mathcal D_n=\mathcal F_n-\mathcal G_n\).
For scheme \eqref{eq:direct-split-iteration}, the residual is
\begin{equation}
  \label{eq:s-residual}
  \begin{aligned}
  \mathcal R^{\mathcal S}_{P,n}(\mathbf y)
  &:=\mathcal F_n(\mathbf y)-\mathcal S_{P,n}(\mathbf y)\\
  &=\mathcal R_{P,n}(\mathbf y)
    +\mathcal G_n(\mathbf y)-\mathcal G_n(P\mathbf y)-\mathcal G_n(Q\mathbf y),
  \qquad \mathbf y,P\mathbf y,Q\mathbf y\in\Omega_n,
  \end{aligned}
\end{equation}
by \eqref{eq:exact-form-boundary}.
Define \(\mathbf r_n\) on \(\Omega_n\) for \(0\le n<N\) by
\[
  \mathcal G_n(\mathbf y)
  =\mathbf y+h\mathbf f(\mathbf y,T_n)+\mathbf r_n(\mathbf y).
\]
For \(\mathbf y\in\mathbb R^d\), define the mixed difference of
\(\mathbf f(\cdot,T_n)\) at \(\mathbf0\) with increments \(P\mathbf y\) and
\(Q\mathbf y\) by
\begin{equation}
  \label{eq:mixed-interaction}
  \boldsymbol\delta_n(\mathbf y;P)
  :=\mathbf f(\mathbf y,T_n)-\mathbf f(P\mathbf y,T_n)
    -\mathbf f(Q\mathbf y,T_n)+\mathbf f(\mathbf0,T_n).
\end{equation}
\Needspace{4\baselineskip}
We impose the following assumption at one pair \((\mathbf x,\mathbf w)\).
\begin{assumption}
  \label{ass:local-form-source-regularity}
  Fix a quadruple \((n,P;\mathbf x,\mathbf w)\), where
  \(0\le n\le N-1\) and \(P\) is an orthogonal projection, and let
  \[
    \Sigma:=\{(1-\tau)\mathbf w+\tau\mathbf x:0\le\tau\le1\}.
  \]
  Let \(p\ge1\) be an integer and \(C_{\mathcal D},C_r,L_\delta\ge0\)
  constants.

  \ivpassumptionpart\label{ass:form-derivative-bound}
  \(\mathcal D_n\) is \(C^1\) on an open subset of
  \(\Omega_n\) containing \(\Sigma\cup P\Sigma\), and
  \begin{equation}
    \label{eq:derivative-source-bounds}
    \sup_{\mathbf y\in\Sigma\cup P\Sigma}
    \lVert\mathrm D\mathcal D_n(\mathbf y)\rVert_2
    \le C_{\mathcal D}h^{p+1}.
  \end{equation}

  \ivpassumptionpart\label{ass:form-expansion-bounds}
  Assumption~\ref{ass:form-derivative-bound} holds.
  On an open subset of \(\Omega_n\) containing
  \(\Sigma\cup P\Sigma\cup Q\Sigma\), \(\mathbf r_n\) is \(C^1\) with
  \(\lVert\mathrm D\mathbf r_n(\mathbf y)\rVert_2\le C_rh^2\)
  for \(\mathbf y\in\Sigma\cup P\Sigma\cup Q\Sigma\).
  On an open neighborhood of \(\Sigma\) in \(\mathbb R^d\),
  \(\boldsymbol\delta_n(\cdot;P)\) is \(C^1\) with
  \(\lVert\mathrm D\boldsymbol\delta_n(\mathbf y;P)\rVert_2\le L_\delta\)
  for \(\mathbf y\in\Sigma\).
\end{assumption}

Uniformity of Assumption~\ref{ass:local-form-source-regularity} over the
family in \eqref{eq:uniform-grid-family} is specified in
Section~\ref{subsec:corrected-iteration-form-boundary}.
The discrepancy estimate of Gander and
Hairer~\cite[Sec.~3, Eq.~(7), and Thm.~1]{gander2008nonlinear}
assumes an exact fine solver.
Here \eqref{eq:derivative-source-bounds} is imposed directly over
\(\Sigma\cup P\Sigma\) on the discrepancy
\(\mathcal D_n=\mathcal F_n-\mathcal G_n\) of the two numerical solvers.

\subsection{Lipschitz bounds for two residuals}
\label{subsec:exact-form-boundary}

To compare \(\mathcal R_{P,n}\) in
\eqref{eq:unresolved-residual} with \(\mathcal R^{\mathcal S}_{P,n}\) in
\eqref{eq:s-residual},
we consider the increment of each residual from \(\mathbf w\) to \(\mathbf x\).
For \(\mathbf x,P\mathbf x,\mathbf w,P\mathbf w\in\Omega_n\), define
\begin{equation}
  \label{eq:two-state-source}
  \mathbf s_{\mathcal H,n}(\mathbf x,\mathbf w)
  :=\mathcal R_{P,n}(\mathbf x)-\mathcal R_{P,n}(\mathbf w).
\end{equation}
For \(\mathbf x,P\mathbf x,Q\mathbf x,\mathbf w,P\mathbf w,Q\mathbf w\in\Omega_n\),
define
\[
  \mathbf s_{\mathcal S,n}(\mathbf x,\mathbf w)
  :=\mathcal R^{\mathcal S}_{P,n}(\mathbf x)-\mathcal R^{\mathcal S}_{P,n}(\mathbf w).
\]
The dependence of \(\mathbf s_{\mathcal H,n}\) and \(\mathbf s_{\mathcal S,n}\) on \(P\) is
suppressed.
For the iteration step analyzed in Section~\ref{subsec:projection-dependent-one-sweep}
under Assumption~\ref{ass:local-pathwise-regularity}, set \(P=P^k\).
With \(\mathcal A_k\) as in \eqref{eq:active-set},
definition \eqref{eq:old-state-source} gives
\[
  \mathbf s_{\mathcal H,n}(\mathbf u_n^\star,\mathbf U_n^k)
  =\mathbf s_{n+1}^k,\qquad n\in\mathcal A_k.
\]
The following lemma bounds
\(\lVert\mathbf s_{\mathcal H,n}(\mathbf x,\mathbf w)\rVert_2\) and
\(\lVert\mathbf s_{\mathcal S,n}(\mathbf x,\mathbf w)\rVert_2\) by multiples of
\(\lVert\mathbf x-\mathbf w\rVert_2\).
\begin{lemma}
  \label{lem:form-specific-source}
  Let \(0\le n\le N-1\), and let \(P\) be an orthogonal projection.
  Fix a pair \((\mathbf x,\mathbf w)\).
  Under Assumption~\textup{\ref{ass:form-derivative-bound}} at
  \((n,P;\mathbf x,\mathbf w)\),
  \begin{equation}
    \label{eq:source-bound-discrepancy}
    \lVert\mathbf s_{\mathcal H,n}(\mathbf x,\mathbf w)\rVert_2
    \le2C_{\mathcal D}h^{p+1}\lVert\mathbf x-\mathbf w\rVert_2.
  \end{equation}
  Under Assumption~\textup{\ref{ass:form-expansion-bounds}}
  at the same quadruple,
  \begin{equation}
    \label{eq:source-bound-split}
    \lVert\mathbf s_{\mathcal S,n}(\mathbf x,\mathbf w)\rVert_2
    \le(2C_{\mathcal D}h^{p+1}+L_\delta h+3C_rh^2)
          \lVert\mathbf x-\mathbf w\rVert_2.
  \end{equation}

\end{lemma}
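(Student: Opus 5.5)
The first bound is a mean value argument. By \eqref{eq:unresolved-residual},
\[
  \mathbf s_{\mathcal H,n}(\mathbf x,\mathbf w)
  =\bigl[\mathcal D_n(\mathbf x)-\mathcal D_n(\mathbf w)\bigr]
   -\bigl[\mathcal D_n(P\mathbf x)-\mathcal D_n(P\mathbf w)\bigr].
\]
Both brackets are handled with the fundamental theorem of calculus.

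\begin{itemize}
\item The first bracket is integrated along \(\Sigma\). Since \(\Sigma\) lies in the open set where \(\mathcal D_n\) is \(C^1\), we get \(\mathcal D_n(\mathbf x)-\mathcal D_n(\mathbf w)=\int_0^1\mathrm D\mathcal D_n(\mathbf w+\tau(\mathbf x-\mathbf w))(\mathbf x-\mathbf w)\,d\tau\). By \eqref{eq:derivative-source-bounds} its norm is at most \(C_{\mathcal D}h^{p+1}\lVert\mathbf x-\mathbf w\rVert_2\).
\item The second bracket is integrated along the segment \(P\Sigma\), which joins \(P\mathbf w\) to \(P\mathbf x\) because \(P\) is linear. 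This gives the bound \(C_{\mathcal D}h^{p+1}\lVert P(\mathbf x-\mathbf w)\rVert_2\le C_{\mathcal D}h^{p+1}\lVert\mathbf x-\mathbf w\rVert_2\), using \(\lVert P\rVert_2\le1\).
\end{itemize}

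The triangle inequality then gives \eqref{eq:source-bound-discrepancy}.

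For the second bound I use \eqref{eq:s-residual} to write \(\mathbf s_{\mathcal S,n}=\mathbf s_{\mathcal H,n}+\Delta\), where
\[
  \Delta:=\mathbf g(\mathbf x)-\mathbf g(\mathbf w),\qquad
  \mathbf g(\mathbf y):=\mathcal G_n(\mathbf y)-\mathcal G_n(P\mathbf y)-\mathcal G_n(Q\mathbf y).
\]
The first part is already bounded by \(2C_{\mathcal D}h^{p+1}\lVert\mathbf x-\mathbf w\rVert_2\).

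For \(\Delta\), I insert the expansion \(\mathcal G_n(\mathbf y)=\mathbf y+h\mathbf f(\mathbf y,T_n)+\mathbf r_n(\mathbf y)\). The identity parts cancel, since \(\mathbf y-P\mathbf y-Q\mathbf y=\mathbf0\). The \(\mathbf f\) parts combine to \(h[\boldsymbol\delta_n(\mathbf y;P)-\mathbf f(\mathbf0,T_n)]\) by \eqref{eq:mixed-interaction}. Hence
\[
  \mathbf g(\mathbf y)=h\boldsymbol\delta_n(\mathbf y;P)-h\mathbf f(\mathbf0,T_n)
   +\mathbf r_n(\mathbf y)-\mathbf r_n(P\mathbf y)-\mathbf r_n(Q\mathbf y).
\]
The constant \(-h\mathbf f(\mathbf0,T_n)\) cancels in the difference \(\Delta\). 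This leaves a \(\boldsymbol\delta_n\) term and three \(\mathbf r_n\) terms.

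\begin{itemize}
\item The \(\boldsymbol\delta_n\) term is integrated along \(\Sigma\), where \(\boldsymbol\delta_n(\cdot;P)\) is \(C^1\) with derivative bounded by \(L_\delta\). This gives at most \(L_\delta h\lVert\mathbf x-\mathbf w\rVert_2\).
\item The three \(\mathbf r_n\) differences are integrated along \(\Sigma\), \(P\Sigma\), and \(Q\Sigma\) respectively. Each is a segment, since \(P\) and \(Q\) are linear, and each lies in the set where \(\lVert\mathrm D\mathbf r_n\rVert_2\le C_rh^2\). Together with \(\lVert P\rVert_2,\lVert Q\rVert_2\le1\), each contributes at most \(C_rh^2\lVert\mathbf x-\mathbf w\rVert_2\), for a total of \(3C_rh^2\lVert\mathbf x-\mathbf w\rVert_2\).
\end{itemize}

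Summing all contributions yields \eqref{eq:source-bound-split}.

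The only real care point is the domain bookkeeping. The quantities \(\mathbf s_{\mathcal H,n}\) and \(\mathbf s_{\mathcal S,n}\) must be defined, and each path of integration must lie in the open set where the relevant map is \(C^1\). The images \(P\Sigma\) and \(Q\Sigma\) are exactly the segments needed, and Assumption~\ref{ass:local-form-source-regularity} places them in those sets. Beyond that, the argument is routine.
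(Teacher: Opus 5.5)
Your proposal is correct and follows the paper's proof essentially step for step. You get the first bound from the fundamental theorem of calculus along \(\Sigma\) and \(P\Sigma\) with \(\lVert P\rVert_2\le1\); for the second, you write \(\mathbf s_{\mathcal S,n}-\mathbf s_{\mathcal H,n}\) as the increment of \(\mathcal G_n(\mathbf y)-\mathcal G_n(P\mathbf y)-\mathcal G_n(Q\mathbf y)\) (which the paper phrases as \(\mathcal H_{P,n}-\mathcal S_{P,n}\)), expand \(\mathcal G_n\), cancel the identity and constant \(-h\mathbf f(\mathbf 0,T_n)\) terms, and bound the \(\boldsymbol\delta_n\) and three \(\mathbf r_n\) increments by \(L_\delta h\lVert\mathbf x-\mathbf w\rVert_2\) and \(3C_rh^2\lVert\mathbf x-\mathbf w\rVert_2\) — only your auxiliary name \(\mathbf g\) clashes with the reaction term \(\mathbf g\) of Section~\ref{subsec:numerical-test-problems}.
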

\begin{proof}
  Write \(\mathbf v:=\mathbf x-\mathbf w\).
  By the fundamental theorem of calculus and \eqref{eq:unresolved-residual},
  we have
  \[
    \mathbf s_{\mathcal H,n}(\mathbf x,\mathbf w)
    =\int_0^1\mathrm D\mathcal D_n(\mathbf w+\tau\mathbf v)\mathbf v\,d\tau
     -\int_0^1\mathrm D\mathcal D_n(P\mathbf w+\tau P\mathbf v)P\mathbf v\,d\tau.
  \]
  For \(0\le\tau\le1\), both \(\mathbf w+\tau\mathbf v\) and
  \(P\mathbf w+\tau P\mathbf v\) lie in \(\Sigma\cup P\Sigma\), so
  \eqref{eq:derivative-source-bounds} and \(\lVert P\rVert_2\le1\) imply
  \eqref{eq:source-bound-discrepancy}.

  Under Assumption~\ref{ass:form-expansion-bounds},
  the definitions of
  \(\mathcal R_{P,n}\) in \eqref{eq:unresolved-residual} and
  \(\mathcal R^{\mathcal S}_{P,n}\) in \eqref{eq:s-residual} yield
  \[
    \begin{aligned}
      \mathcal R^{\mathcal S}_{P,n}(\mathbf y)-\mathcal R_{P,n}(\mathbf y)
      &=[\mathcal F_n(\mathbf y)-\mathcal S_{P,n}(\mathbf y)]
        -[\mathcal F_n(\mathbf y)-\mathcal H_{P,n}(\mathbf y)]\\
      &=\mathcal H_{P,n}(\mathbf y)-\mathcal S_{P,n}(\mathbf y)
    \end{aligned}
  \]
  for \(\mathbf y,P\mathbf y,Q\mathbf y\in\Omega_n\).
  Subtracting this identity at \(\mathbf w\) from the identity at
  \(\mathbf x\), we obtain
  \begin{equation}
    \label{eq:exact-source-boundary}
    \mathbf s_{\mathcal S,n}(\mathbf x,\mathbf w)-\mathbf s_{\mathcal H,n}(\mathbf x,\mathbf w)
    =[\mathcal H_{P,n}(\mathbf x)-\mathcal S_{P,n}(\mathbf x)]
     -[\mathcal H_{P,n}(\mathbf w)-\mathcal S_{P,n}(\mathbf w)].
  \end{equation}
  For \(\mathbf y\in\Omega_n\), the definition of \(\mathbf r_n\) gives
  \[
    \mathcal G_n(\mathbf y)
    =\mathbf y+h\mathbf f(\mathbf y,T_n)+\mathbf r_n(\mathbf y).
  \]
  Substituting this identity at
  \(\mathbf y,P\mathbf y,Q\mathbf y\in\Omega_n\) into
  \eqref{eq:exact-form-boundary} and using
  \(\mathbf y-P\mathbf y-Q\mathbf y=\mathbf0\) and
  \eqref{eq:mixed-interaction} yields
  \begin{equation}
    \label{eq:interaction-expansion}
    \begin{aligned}
    \mathcal H_{P,n}(\mathbf y)-\mathcal S_{P,n}(\mathbf y)
    &=h[\boldsymbol\delta_n(\mathbf y;P)-\mathbf f(\mathbf0,T_n)]+\mathbf r_n(\mathbf y)-\mathbf r_n(P\mathbf y)-\mathbf r_n(Q\mathbf y).
    \end{aligned}
  \end{equation}
  The constant term \(-h\mathbf f(\mathbf0,T_n)\) cancels
  when the increment from \(\mathbf w\) to \(\mathbf x\) is taken.
  Substituting \eqref{eq:interaction-expansion} into
  \eqref{eq:exact-source-boundary} gives
  \begin{equation}
    \label{eq:source-boundary-expansion}
    \mathbf s_{\mathcal S,n}(\mathbf x,\mathbf w)-\mathbf s_{\mathcal H,n}(\mathbf x,\mathbf w)
    =h[\boldsymbol\delta_n(\mathbf x;P)-\boldsymbol\delta_n(\mathbf w;P)]
       +\boldsymbol\rho_n(\mathbf x,\mathbf w),
  \end{equation}
  where
  \begin{equation}
    \label{eq:rho-definition}
    \begin{aligned}
    \boldsymbol\rho_n(\mathbf x,\mathbf w)
    &:=[\mathbf r_n(\mathbf x)-\mathbf r_n(\mathbf w)]-[\mathbf r_n(P\mathbf x)-\mathbf r_n(P\mathbf w)]-[\mathbf r_n(Q\mathbf x)-\mathbf r_n(Q\mathbf w)].
    \end{aligned}
  \end{equation}
  Applying the fundamental theorem of calculus to the three
  differences of \(\mathbf r_n\) in \eqref{eq:rho-definition} and using
  Assumption~\ref{ass:form-expansion-bounds}
  and \(\lVert P\rVert_2,\lVert Q\rVert_2\le1\), we obtain
  \begin{equation}
    \label{eq:rho-bound}
    \begin{aligned}
      \lVert\boldsymbol\rho_n(\mathbf x,\mathbf w)\rVert_2
      &\le C_rh^2\bigl(\lVert\mathbf v\rVert_2
         +\lVert P\mathbf v\rVert_2+\lVert Q\mathbf v\rVert_2\bigr)\\
      &\le3C_rh^2\lVert\mathbf v\rVert_2.
    \end{aligned}
  \end{equation}
  Applying the same argument to \(\boldsymbol\delta_n(\cdot;P)\)
  along \(\Sigma\) gives
  \begin{equation}
    \label{eq:delta-increment-bound}
    \lVert\boldsymbol\delta_n(\mathbf x;P)-\boldsymbol\delta_n(\mathbf w;P)\rVert_2
    \le L_\delta\lVert\mathbf v\rVert_2.
  \end{equation}
  Applying the triangle inequality to
  \eqref{eq:source-boundary-expansion} and using \eqref{eq:rho-bound},
  \eqref{eq:delta-increment-bound}, and \eqref{eq:source-bound-discrepancy}
  yields \eqref{eq:source-bound-split}.
\end{proof}

In the bounds of Lemma~\ref{lem:form-specific-source}, the
increment of \(\mathcal R_{P,n}\) has the discrepancy order \(h^{p+1}\),
whereas that of \(\mathcal R^{\mathcal S}_{P,n}\) carries an additional
\(O(h)\) term from the mixed difference \(\boldsymbol\delta_n(\cdot;P)\).
This term accounts for the different powers of \(h\) in the estimates for
schemes \eqref{eq:direct-split-iteration} and
\eqref{eq:projection-parareal} in
Theorems~\ref{thm:first-active-form-boundary}
and~\ref{prop:later-node-form-envelopes}.

\subsection{Asymptotic error estimates for the two schemes}
\label{subsec:corrected-iteration-form-boundary}

We next estimate the errors after one step of each scheme from a common
approximation, using the same orthogonal projection.
On each grid of the family in \eqref{eq:uniform-grid-family},
choose \(0\le k\le N-2\), an orthogonal projection \(P\)
kept fixed across all coarse intervals of the grid, and a
common old approximation \(\mathbf U^k\) satisfying
\[
  \mathbf U_j^k=\mathbf u_j^\star,\qquad 0\le j\le k.
\]

From the common old approximation \(\mathbf U^k\), take one admissible step
of scheme \eqref{eq:projection-parareal} and one of
scheme \eqref{eq:direct-split-iteration} to obtain
\(\mathbf U_{\mathcal H}^{k+1}\) and \(\mathbf U_{\mathcal S}^{k+1}\),
respectively.
For \(0\le j\le N\), set
\[
  \mathbf e_{\mathcal H,j}^{k+1}:=\mathbf u_j^\star-\mathbf U_{\mathcal H,j}^{k+1},
  \qquad
  \mathbf e_{\mathcal S,j}^{k+1}:=\mathbf u_j^\star-\mathbf U_{\mathcal S,j}^{k+1}.
\]
By Lemma~\ref{lem:finite-step-exactness}, we have
\[
  \mathbf e_{\mathcal H,j}^{k+1}=\mathbf e_{\mathcal S,j}^{k+1}=\mathbf0,
  \qquad 0\le j\le k+1.
\]

For each \(j=0,\ldots,N-1\) such that
\(\mathbf u_j^\star,P\mathbf u_j^\star,Q\mathbf u_j^\star\in\Omega_j\),
subtracting schemes \eqref{eq:projection-parareal} and \eqref{eq:direct-split-iteration} from \eqref{eq:fine-reference} gives
\begin{equation}
  \label{eq:two-branch-error-recurrence}
  \begin{aligned}
  \mathbf e_{\mathcal H,j+1}^{k+1}
  &=\mathcal H_{P,j}(\mathbf u_j^\star)
    -\mathcal H_{P,j}(\mathbf U_{\mathcal H,j}^{k+1})
    +\mathbf s_{\mathcal H,j}(\mathbf u_j^\star,\mathbf U_j^k),\\
  \mathbf e_{\mathcal S,j+1}^{k+1}
  &=\mathcal S_{P,j}(\mathbf u_j^\star)
    -\mathcal S_{P,j}(\mathbf U_{\mathcal S,j}^{k+1})
    +\mathbf s_{\mathcal S,j}(\mathbf u_j^\star,\mathbf U_j^k).
  \end{aligned}
\end{equation}
On each line of \eqref{eq:two-branch-error-recurrence}, the map
difference propagates the corresponding new error.
Both \(\mathbf s_{\mathcal H,j}\) and \(\mathbf s_{\mathcal S,j}\)
are evaluated at the common pair
\((\mathbf u_j^\star,\mathbf U_j^k)\).
At \(j=k+1\), both map differences vanish, so
\[
  \begin{aligned}
  \mathbf e_{\mathcal H,k+2}^{k+1}
  &=\mathbf s_{\mathcal H,k+1}(\mathbf u_{k+1}^\star,\mathbf U_{k+1}^k),\\
  \mathbf e_{\mathcal S,k+2}^{k+1}
  &=\mathbf s_{\mathcal S,k+1}(\mathbf u_{k+1}^\star,\mathbf U_{k+1}^k).
  \end{aligned}
\]
At subsequent time points, the differences involving
\(\mathcal H_{P,j}\) and \(\mathcal S_{P,j}\) in
\eqref{eq:two-branch-error-recurrence} are evaluated at the respective pairs
\((\mathbf u_j^\star,\mathbf U_{\mathcal H,j}^{k+1})\) and
\((\mathbf u_j^\star,\mathbf U_{\mathcal S,j}^{k+1})\).
These two new approximations need not coincide.

On each grid of the family in \eqref{eq:uniform-grid-family},
prescribe a collection of quadruples \((n,P;\mathbf x,\mathbf w)\).
We say that Assumption~\ref{ass:form-expansion-bounds} holds uniformly on these
collections if there exist an integer \(p\ge1\) and finite constants
\(C_{\mathcal D},C_r,L_\delta\ge0\) such that it holds at every
prescribed quadruple on every grid with the same parameters.
The open sets in
Assumption~\ref{ass:local-form-source-regularity} may depend on both the grid
and the quadruple.

To obtain a lower bound for \(\lVert\mathbf e_{\mathcal S,k+2}^{k+1}\rVert_2\),
we impose the following assumption.
\begin{assumption}
  \label{ass:first-active-source-separation}
  On each grid of the family in \eqref{eq:uniform-grid-family},
  Assumption~\ref{ass:form-expansion-bounds}
  holds uniformly at
  \[
    (k+1,P;\mathbf u_{k+1}^\star,\mathbf U_{k+1}^k).
  \]
  The old error satisfies \(\mathbf e_{k+1}^k\ne\mathbf0\), and
  \[
    \Delta\boldsymbol\delta
    :=\boldsymbol\delta_{k+1}(\mathbf u_{k+1}^\star;P)
     -\boldsymbol\delta_{k+1}(\mathbf U_{k+1}^k;P)
  \]
  obeys
  \begin{equation}
    \label{eq:first-active-source-separation}
    \lVert\Delta\boldsymbol\delta\rVert_2\ge c_\delta\lVert\mathbf e_{k+1}^k\rVert_2
  \end{equation}
  for a constant \(c_\delta>0\) independent of the grid.
\end{assumption}

The lower bound is required only for the pair
\((\mathbf u_{k+1}^{\star},\mathbf U_{k+1}^{k})\) on each grid of the family,
rather than for all pairs. Nonlinearity of \(\mathbf f\) alone does not
guarantee a positive constant \(c_\delta\) independent of the grid.
The following theorem compares
\(\lVert\mathbf e_{\mathcal H,k+2}^{k+1}\rVert_2\) and
\(\lVert\mathbf e_{\mathcal S,k+2}^{k+1}\rVert_2\) at the first potentially
inexact time point \(T_{k+2}\).
\begin{theorem}
  \label{thm:first-active-form-boundary}
  Consider a family of uniform grids along which \(h\to0\),
  satisfying \eqref{eq:uniform-grid-family} with fixed \(T,h_0>0\) and well-defined fine reference solutions
  \(\mathbf u^\star\) from \eqref{eq:fine-reference}.
  On each grid, let \(0\le k\le N-2\) and let \(\mathbf U^k\) satisfy
  \(\mathbf U_j^k=\mathbf u_j^\star\) for \(0\le j\le k\).
  Take one admissible step of scheme \eqref{eq:projection-parareal} and one of
  scheme \eqref{eq:direct-split-iteration} from \(\mathbf U^k\) with the same
  orthogonal projection \(P\).
  If Assumption~\ref{ass:first-active-source-separation} holds, then the
  errors at the first potentially inexact time point \(T_{k+2}\) satisfy
  \begin{equation}
    \label{eq:first-active-scales}
    \frac{\lVert\mathbf e_{\mathcal H,k+2}^{k+1}\rVert_2}
         {\lVert\mathbf e_{k+1}^k\rVert_2}=O(h^{p+1}),
    \qquad
    \frac{\lVert\mathbf e_{\mathcal S,k+2}^{k+1}\rVert_2}
         {\lVert\mathbf e_{k+1}^k\rVert_2}=\Theta(h),
    \qquad
    \frac{\lVert\mathbf e_{\mathcal H,k+2}^{k+1}\rVert_2}
         {\lVert\mathbf e_{\mathcal S,k+2}^{k+1}\rVert_2}=O(h^p).
  \end{equation}
  The constants in the \(O(\cdot)\) and \(\Theta(\cdot)\) bounds
  are independent of the grid.
\end{theorem}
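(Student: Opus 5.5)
The plan is to work entirely at the single index \(j=k+1\), where the error recurrence \eqref{eq:two-branch-error-recurrence} has already collapsed to the identities
\[
  \mathbf e_{\mathcal H,k+2}^{k+1}=\mathbf s_{\mathcal H,k+1}(\mathbf u_{k+1}^\star,\mathbf U_{k+1}^k),
  \qquad
  \mathbf e_{\mathcal S,k+2}^{k+1}=\mathbf s_{\mathcal S,k+1}(\mathbf u_{k+1}^\star,\mathbf U_{k+1}^k).
\]
Both errors are therefore increments of the residuals between \(\mathbf w=\mathbf U_{k+1}^k\) and \(\mathbf x=\mathbf u_{k+1}^\star\), with \(\mathbf x-\mathbf w=\mathbf e_{k+1}^k\ne\mathbf0\). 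No propagation factor enters, so no analogue of \(\gamma_k\) is needed.

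\emph{The bound for \(\mathbf e_{\mathcal H}\).} Assumption~\ref{ass:first-active-source-separation} gives Assumption~\ref{ass:form-expansion-bounds}, and hence Assumption~\ref{ass:form-derivative-bound}, uniformly at the quadruple \((k+1,P;\mathbf u_{k+1}^\star,\mathbf U_{k+1}^k)\). Lemma~\ref{lem:form-specific-source}, bound \eqref{eq:source-bound-discrepancy}, then yields
\[
  \lVert\mathbf e_{\mathcal H,k+2}^{k+1}\rVert_2\le 2C_{\mathcal D}h^{p+1}\lVert\mathbf e_{k+1}^k\rVert_2 .
\]
Bound \eqref{eq:source-bound-split} likewise gives the upper half of the \(\Theta(h)\) claim:
\[
  \lVert\mathbf e_{\mathcal S,k+2}^{k+1}\rVert_2\le(2C_{\mathcal D}h_0^{p}+L_\delta+3C_rh_0)\,h\,\lVert\mathbf e_{k+1}^k\rVert_2,
\]
using \(h\le h_0\). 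All constants here are grid-independent by the uniformity hypothesis.

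\emph{The lower bound for \(\mathbf e_{\mathcal S}\).} This is the main step. I would reuse the identity \eqref{eq:source-boundary-expansion} from the proof of Lemma~\ref{lem:form-specific-source}:
\[
  \mathbf s_{\mathcal S,k+1}=\mathbf s_{\mathcal H,k+1}+h\,\Delta\boldsymbol\delta+\boldsymbol\rho_{k+1}.
\]
The reverse triangle inequality, combined with \eqref{eq:first-active-source-separation}, \eqref{eq:source-bound-discrepancy} and \eqref{eq:rho-bound}, then gives
\[
  \lVert\mathbf e_{\mathcal S,k+2}^{k+1}\rVert_2\ge\bigl(c_\delta h-2C_{\mathcal D}h^{p+1}-3C_rh^2\bigr)\lVert\mathbf e_{k+1}^k\rVert_2 .
\]
Since \(p\ge1\), the subtracted terms are \(O(h^2)\). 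Hence for \(h\le h_1:=\min\{h_0,\,c_\delta/(2(2C_{\mathcal D}h_0^{p-1}+3C_r))\}\) the factor is at least \(\tfrac{c_\delta}{2}h\). This gives the \(\Theta(h)\) statement asymptotically as \(h\to0\), with grid-independent constants.

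The only subtlety is interpreting \(\Theta\) as an asymptotic statement. For grids with \(h_1<h\le h_0\) a positive lower bound may fail, and I would state the lower bound only for \(h\le h_1\). This is consistent with "as \(h\to0\)". Everything else is bookkeeping.

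\emph{The ratio.} For \(h\le h_1\), \(\mathbf e_{\mathcal S,k+2}^{k+1}\ne\mathbf0\), so the ratio is defined. Dividing the \(\mathcal H\) upper bound by the \(\mathcal S\) lower bound gives
\[
  \frac{\lVert\mathbf e_{\mathcal H,k+2}^{k+1}\rVert_2}{\lVert\mathbf e_{\mathcal S,k+2}^{k+1}\rVert_2}\le\frac{4C_{\mathcal D}}{c_\delta}\,h^{p}.
\]
This completes \eqref{eq:first-active-scales}.
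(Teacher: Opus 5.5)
Your proposal is correct and follows the paper's proof essentially step for step: you use the same collapse of \eqref{eq:two-branch-error-recurrence} at \(j=k+1\), the same decomposition \(\mathbf e_{\mathcal S,k+2}^{k+1}=\mathbf e_{\mathcal H,k+2}^{k+1}+h\Delta\boldsymbol\delta+\boldsymbol\rho\) with the reverse triangle inequality, a grid-independent threshold \(h_1\), and the same ratio constant \(4C_{\mathcal D}/c_\delta\). The only cosmetic difference is in the upper bound for the \(\mathcal S\)-error: you take it from \eqref{eq:source-bound-split} with \(h\le h_0\), giving the constant \(2C_{\mathcal D}h_0^p+L_\delta+3C_rh_0\), whereas the paper uses \(L_\delta+c_\delta/2\) for \(h\le h_1\).
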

\begin{proof}
  On each grid of the family in \eqref{eq:uniform-grid-family},
  Lemma~\ref{lem:finite-step-exactness} implies
  \[
    \mathbf U_{\mathcal H,k+1}^{k+1}
    =\mathbf U_{\mathcal S,k+1}^{k+1}
    =\mathbf u_{k+1}^\star.
  \]
  Admissibility of the step of scheme \eqref{eq:direct-split-iteration} therefore ensures that
  \(\mathbf u_{k+1}^\star,P\mathbf u_{k+1}^\star,
    Q\mathbf u_{k+1}^\star\in\Omega_{k+1}\).
  The map differences in \eqref{eq:two-branch-error-recurrence}
  vanish at \(j=k+1\).
  With \(\boldsymbol\rho:=\boldsymbol\rho_{k+1}(\mathbf u_{k+1}^\star,\mathbf U_{k+1}^k)\),
  \eqref{eq:two-branch-error-recurrence} at \(j=k+1\) and
  \eqref{eq:source-boundary-expansion} yield
  \begin{equation}
    \label{eq:first-active-form-error-boundary}
    \mathbf e_{\mathcal H,k+2}^{k+1}
    =\mathbf s_{\mathcal H,k+1}(\mathbf u_{k+1}^\star,\mathbf U_{k+1}^k),
    \qquad
    \mathbf e_{\mathcal S,k+2}^{k+1}
    =\mathbf e_{\mathcal H,k+2}^{k+1}+h\Delta\boldsymbol\delta+\boldsymbol\rho.
  \end{equation}
  By Lemma~\ref{lem:form-specific-source} and the bounds
  \eqref{eq:rho-bound} and \eqref{eq:delta-increment-bound}, we have
  \[
    \begin{aligned}
    \lVert\mathbf e_{\mathcal H,k+2}^{k+1}\rVert_2
    &\le2C_{\mathcal D}h^{p+1}\lVert\mathbf e_{k+1}^k\rVert_2,\\
    \lVert\boldsymbol\rho\rVert_2&\le3C_rh^2\lVert\mathbf e_{k+1}^k\rVert_2,\\
    \lVert\Delta\boldsymbol\delta\rVert_2&\le L_\delta\lVert\mathbf e_{k+1}^k\rVert_2.
    \end{aligned}
  \]
  Assumption~\ref{ass:first-active-source-separation} gives
  \(\mathbf e_{k+1}^k\ne\mathbf0\) and
  \(\lVert\Delta\boldsymbol\delta\rVert_2\ge c_\delta\lVert\mathbf e_{k+1}^k\rVert_2\).
  Applying the triangle and reverse triangle inequalities to
  \eqref{eq:first-active-form-error-boundary}, we then obtain
  \[
    c_\delta h-3C_rh^2-2C_{\mathcal D}h^{p+1}
    \le\frac{\lVert\mathbf e_{\mathcal S,k+2}^{k+1}\rVert_2}
             {\lVert\mathbf e_{k+1}^k\rVert_2}
    \le L_\delta h+3C_rh^2+2C_{\mathcal D}h^{p+1}.
  \]
  The exponent \(p\) and the constants \(c_\delta\), \(L_\delta\), \(C_r\),
  \(C_{\mathcal D}\), and \(h_0\) are independent of the grid.
  We can thus choose a
  grid-independent \(h_1\in(0,h_0]\) so that
  \(3C_rh+2C_{\mathcal D}h^p\le c_\delta/2\) for \(0<h\le h_1\).
  For such \(h\), the higher-order terms are at most half of the leading
  term \(c_\delta h\).
  For \(0<h\le h_1\), the preceding estimate implies
  \[
    \frac{c_\delta}{2}h
    \le\frac{\lVert\mathbf e_{\mathcal S,k+2}^{k+1}\rVert_2}
             {\lVert\mathbf e_{k+1}^k\rVert_2}
    \le\left(L_\delta+\frac{c_\delta}{2}\right)h.
  \]
  Combining the bound on
  \(\lVert\mathbf e_{\mathcal H,k+2}^{k+1}\rVert_2\) above with the lower bound
  in the preceding display yields
  \[
    \frac{\lVert\mathbf e_{\mathcal H,k+2}^{k+1}\rVert_2}
         {\lVert\mathbf e_{\mathcal S,k+2}^{k+1}\rVert_2}
    \le\frac{4C_{\mathcal D}}{c_\delta}h^p.
  \]
  Thus \eqref{eq:first-active-scales} holds for \(0<h\le h_1\) with the
  grid-independent constants \(2C_{\mathcal D}\), \(c_\delta/2\),
  \(L_\delta+c_\delta/2\), and \(4C_{\mathcal D}/c_\delta\).
\end{proof}

Even without the lower bound \eqref{eq:first-active-source-separation},
the identities in \eqref{eq:first-active-form-error-boundary},
Lemma~\ref{lem:form-specific-source}, and the bound \eqref{eq:rho-bound} imply
\[
  \bigl\lVert\mathbf e_{\mathcal S,k+2}^{k+1}-h\Delta\boldsymbol\delta\bigr\rVert_2
  \le\bigl(2C_{\mathcal D}h^{p+1}+3C_rh^2\bigr)\lVert\mathbf e_{k+1}^k\rVert_2 .
\]
The two-sided \(\Theta(h)\) estimate and the bound on the error ratio in
\eqref{eq:first-active-scales} additionally use the lower bound.
The lower bound \eqref{eq:first-active-source-separation} in
Assumption~\ref{ass:first-active-source-separation} applies to
\[
  \Delta\boldsymbol\delta
  =\boldsymbol\delta_{k+1}(\mathbf u_{k+1}^\star;P)
   -\boldsymbol\delta_{k+1}(\mathbf U_{k+1}^k;P),
\]
rather than to the two values separately.
This lower bound cannot hold for \(P=0\), \(P=I\), or linear
\(\mathbf f(\cdot,T_{k+1})\), because
\(\boldsymbol\delta_{k+1}(\cdot;P)\) vanishes in each case.

On each grid of the family in \eqref{eq:uniform-grid-family},
choose \(k+1\le m\le N-1\) and take \(T_{m+1}\) as the target time point.
We now apply \eqref{eq:two-branch-error-recurrence} up to this target.
In this recurrence,
\[
  \begin{aligned}
  \mathbf s_{\mathcal H,j}(\mathbf u_j^\star,\mathbf U_j^k)
  &=\mathcal R_{P,j}(\mathbf u_j^\star)-\mathcal R_{P,j}(\mathbf U_j^k),\\
  \mathbf s_{\mathcal S,j}(\mathbf u_j^\star,\mathbf U_j^k)
  &=\mathcal R^{\mathcal S}_{P,j}(\mathbf u_j^\star)
    -\mathcal R^{\mathcal S}_{P,j}(\mathbf U_j^k).
  \end{aligned}
\]
Here \(\mathcal R_{P,j}\) and \(\mathcal R^{\mathcal S}_{P,j}\)
are defined in \eqref{eq:unresolved-residual} and \eqref{eq:s-residual},
respectively.
Set \(q_h:=m-k\).
Thus \(q_h\) counts the coarse intervals from \(T_{k+1}\) to \(T_{m+1}\),
and \(q_hh=T_{m+1}-T_{k+1}\).
To bound the errors at \(T_{m+1}\), we impose the following
assumption on the intervals \([T_j,T_{j+1}]\), \(j=k+1,\ldots,m\).
\begin{assumption}
  \label{ass:active-prefix-source-transport}
  On each grid of the family in \eqref{eq:uniform-grid-family},
  Assumption~\ref{ass:form-expansion-bounds}
  holds uniformly at
  \[
    (j,P;\mathbf u_j^\star,\mathbf U_j^k),\qquad j=k+1,\ldots,m.
  \]
  For a constant \(L_{\mathrm{tr}}\ge0\) independent of the interval and the
  grid,
  \begin{equation}
    \label{eq:later-node-transport-stability}
    \begin{aligned}
    \lVert\mathcal H_{P,j}(\mathbf u_j^\star)
    -\mathcal H_{P,j}(\mathbf U_{\mathcal H,j}^{k+1})\rVert_2
    &\le(1+L_{\mathrm{tr}}h)\lVert\mathbf e_{\mathcal H,j}^{k+1}\rVert_2,\\
    \lVert\mathcal S_{P,j}(\mathbf u_j^\star)
    -\mathcal S_{P,j}(\mathbf U_{\mathcal S,j}^{k+1})\rVert_2
    &\le(1+L_{\mathrm{tr}}h)\lVert\mathbf e_{\mathcal S,j}^{k+1}\rVert_2,
    \end{aligned}
    \qquad j=k+1,\ldots,m.
  \end{equation}
\end{assumption}

The inequalities in \eqref{eq:later-node-transport-stability} are
pointwise analogues of the coarse-solver stability condition in
Gander and Hairer~\cite[Eq.~(8)]{gander2008nonlinear}.
Here they are required at the pairs
\((\mathbf u_j^\star,\mathbf U_{\mathcal H,j}^{k+1})\) and
\((\mathbf u_j^\star,\mathbf U_{\mathcal S,j}^{k+1})\).
At \(j=k+1\), both sides of each inequality vanish by
Lemma~\ref{lem:finite-step-exactness}.
The following theorem bounds
\(\lVert\mathbf e_{\mathcal H,m+1}^{k+1}\rVert_2\) and
\(\lVert\mathbf e_{\mathcal S,m+1}^{k+1}\rVert_2\) at the target time point
\(T_{m+1}\) by multiples of
\(\max_{k+1\le j\le m}\lVert\mathbf e_j^k\rVert_2\).
The powers of \(h\) in these bounds depend on how the number
\(q_h\) of coarse intervals varies as \(h\to0\).
\begin{theorem}
  \label{prop:later-node-form-envelopes}
  Consider a family of uniform grids along which \(h\to0\),
  satisfying \eqref{eq:uniform-grid-family} with fixed \(T,h_0>0\) and well-defined fine reference solutions
  \(\mathbf u^\star\) from \eqref{eq:fine-reference}.
  On each grid, let \(0\le k\le N-2\) and \(k+1\le m\le N-1\), and let
  \(\mathbf U^k\) satisfy \(\mathbf U_j^k=\mathbf u_j^\star\) for \(0\le j\le k\).
  Take one admissible step of scheme \eqref{eq:projection-parareal} and one of
  scheme \eqref{eq:direct-split-iteration} from \(\mathbf U^k\) with the same
  orthogonal projection \(P\).
  Suppose that Assumption~\ref{ass:active-prefix-source-transport} holds,
  and write \(q_h=m-k\).
  If \(q_h\le q_0\) for a fixed integer \(q_0\ge1\), then
  \begin{equation}
    \label{eq:fixed-offset-form-envelopes}
    \begin{aligned}
    \lVert\mathbf e_{\mathcal H,m+1}^{k+1}\rVert_2
    &=O(h^{p+1})\max_{k+1\le j\le m}\lVert\mathbf e_j^k\rVert_2,\\
    \lVert\mathbf e_{\mathcal S,m+1}^{k+1}\rVert_2
    &=O(h)\max_{k+1\le j\le m}\lVert\mathbf e_j^k\rVert_2.
    \end{aligned}
  \end{equation}
  If \(q_hh\le\tau_*\) for a fixed \(\tau_*>0\), then
  \begin{equation}
    \label{eq:physical-window-form-envelopes}
    \begin{aligned}
    \lVert\mathbf e_{\mathcal H,m+1}^{k+1}\rVert_2
    &=O(h^p)\max_{k+1\le j\le m}\lVert\mathbf e_j^k\rVert_2,\\
    \lVert\mathbf e_{\mathcal S,m+1}^{k+1}\rVert_2
    &=O(1)\max_{k+1\le j\le m}\lVert\mathbf e_j^k\rVert_2.
    \end{aligned}
  \end{equation}
  The constants in the \(O(\cdot)\) bounds are independent of the grid.
\end{theorem}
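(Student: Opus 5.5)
We work with the two error recurrences \eqref{eq:two-branch-error-recurrence} for \(j=k+1,\ldots,m\) and turn them into scalar inequalities. Write \(a_j:=\lVert\mathbf e_{\mathcal H,j}^{k+1}\rVert_2\), \(b_j:=\lVert\mathbf e_{\mathcal S,j}^{k+1}\rVert_2\), and \(E:=\max_{k+1\le j\le m}\lVert\mathbf e_j^k\rVert_2\).

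First we need the recurrences to be valid on these intervals. Assumption~\ref{ass:active-prefix-source-transport} invokes Assumption~\ref{ass:form-expansion-bounds} at \((j,P;\mathbf u_j^\star,\mathbf U_j^k)\). Since \(\mathbf u_j^\star\in\Sigma\), this places \(\mathbf u_j^\star,P\mathbf u_j^\star,Q\mathbf u_j^\star\) in \(\Omega_j\), so \eqref{eq:two-branch-error-recurrence} holds for these \(j\).

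For the source terms, Lemma~\ref{lem:form-specific-source} with \(\mathbf x-\mathbf w=\mathbf e_j^k\) gives uniform bounds:
\[
  \lVert\mathbf s_{\mathcal H,j}(\mathbf u_j^\star,\mathbf U_j^k)\rVert_2\le 2C_{\mathcal D}h^{p+1}E,
  \qquad
  \lVert\mathbf s_{\mathcal S,j}(\mathbf u_j^\star,\mathbf U_j^k)\rVert_2\le (2C_{\mathcal D}h^{p+1}+L_\delta h+3C_rh^2)E,
\]
with constants independent of the grid. For the propagated terms, the transport inequalities \eqref{eq:later-node-transport-stability} bound the map differences. This yields the recursive inequalities
\[
  a_{j+1}\le(1+L_{\mathrm{tr}}h)a_j+\sigma_{\mathcal H}E,\qquad
  b_{j+1}\le(1+L_{\mathrm{tr}}h)b_j+\sigma_{\mathcal S}E,
\]
where \(\sigma_{\mathcal H}=2C_{\mathcal D}h^{p+1}\), \(\sigma_{\mathcal S}=2C_{\mathcal D}h^{p+1}+L_\delta h+3C_rh^2\), and \(a_{k+1}=b_{k+1}=0\) by Lemma~\ref{lem:finite-step-exactness}.

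This is a discrete Gronwall recursion. Unrolling it from \(j=k+1\) to \(m\) gives
\[
  a_{m+1}\le\sigma_{\mathcal H}E\sum_{i=0}^{q_h-1}(1+L_{\mathrm{tr}}h)^i\le q_h(1+L_{\mathrm{tr}}h)^{q_h}\sigma_{\mathcal H}E,
\]
and the same bound holds for \(b_{m+1}\) with \(\sigma_{\mathcal S}\).

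We then split into the two regimes.

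\emph{Case \(q_h\le q_0\).} Here \((1+L_{\mathrm{tr}}h)^{q_h}\le(1+L_{\mathrm{tr}}h_0)^{q_0}\). Hence \(a_{m+1}\le C h^{p+1}E\). For \(b_{m+1}\), we use \(h\le h_0\) to absorb \(h^2\) and \(h^{p+1}\) into \(h\), since \(p\ge1\); this gives \(b_{m+1}\le ChE\). Both together give \eqref{eq:fixed-offset-form-envelopes}.

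\emph{Case \(q_hh\le\tau_*\).} Here \((1+L_{\mathrm{tr}}h)^{q_h}\le e^{L_{\mathrm{tr}}q_hh}\le e^{L_{\mathrm{tr}}\tau_*}\), and \(q_h\le\tau_*/h\). So one power of \(h\) is lost: \(a_{m+1}\le 2C_{\mathcal D}\tau_*e^{L_{\mathrm{tr}}\tau_*}h^pE\). Likewise \(b_{m+1}\le\tau_*e^{L_{\mathrm{tr}}\tau_*}(L_\delta+3C_rh_0+2C_{\mathcal D}h_0^p)E\), which is \(O(1)E\). These give \eqref{eq:physical-window-form-envelopes}.

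The argument is routine. The only point requiring care is grid independence. All constants must come from the uniformity clause of Assumption~\ref{ass:active-prefix-source-transport} and from the fixed quantities \(h_0\), \(q_0\) and \(\tau_*\). In particular, the exponential factor must be controlled by \(q_hh\) rather than by \(q_h\) alone, and the \(1\)-indexing of the source terms (\(j\) runs from \(k+1\) to \(m\)) must match the maximum defining \(E\).
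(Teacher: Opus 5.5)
Your proposal is correct and follows the paper's proof: the same scalar recursions obtained from Lemma~\ref{lem:form-specific-source} and the transport bounds \eqref{eq:later-node-transport-stability}, started from zero at \(T_{k+1}\) and unrolled into the same geometric sum, followed by the same two-case estimates (\(q_h\le q_0\), and \(q_h\le\tau_*/h\) with \(1+L_{\mathrm{tr}}h\le e^{L_{\mathrm{tr}}h}\)). Your only deviation, bounding by \((1+L_{\mathrm{tr}}h)^{q_h}\) instead of \((1+L_{\mathrm{tr}}h)^{q_h-1}\), is harmless and leaves every constant grid-independent.
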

\begin{proof}
  On each grid of the family in \eqref{eq:uniform-grid-family},
  Assumption~\ref{ass:active-prefix-source-transport} implies
  \(\mathbf u_j^\star,P\mathbf u_j^\star,Q\mathbf u_j^\star\in\Omega_j\)
  for \(j=k+1,\ldots,m\).
  For \(j=k+1,\ldots,m\), applying Lemma~\ref{lem:form-specific-source} at
  \((j,P;\mathbf u_j^\star,\mathbf U_j^k)\) and
  \eqref{eq:later-node-transport-stability} to
  \eqref{eq:two-branch-error-recurrence} yields
  \[
    \begin{aligned}
    \lVert\mathbf e_{\mathcal H,j+1}^{k+1}\rVert_2
    &\le(1+L_{\mathrm{tr}}h)\lVert\mathbf e_{\mathcal H,j}^{k+1}\rVert_2
       +2C_{\mathcal D}h^{p+1}\lVert\mathbf e_j^k\rVert_2,\\
    \lVert\mathbf e_{\mathcal S,j+1}^{k+1}\rVert_2
    &\le(1+L_{\mathrm{tr}}h)\lVert\mathbf e_{\mathcal S,j}^{k+1}\rVert_2
       +(2C_{\mathcal D}h^{p+1}+L_\delta h+3C_rh^2)\lVert\mathbf e_j^k\rVert_2.
    \end{aligned}
  \]
  Both recurrences start from
  \(\mathbf e_{\mathcal H,k+1}^{k+1}=\mathbf e_{\mathcal S,k+1}^{k+1}=\mathbf0\).
  Forward substitution gives
  \[
    \lVert\mathbf e_{\mathcal H,m+1}^{k+1}\rVert_2
    \le2C_{\mathcal D}h^{p+1}
      \sum_{j=k+1}^{m}(1+L_{\mathrm{tr}}h)^{m-j}\lVert\mathbf e_j^k\rVert_2.
  \]
  Replacing the coefficient \(2C_{\mathcal D}h^{p+1}\) in this bound
  by \(2C_{\mathcal D}h^{p+1}+L_\delta h+3C_rh^2\) yields the bound for
  \(\lVert\mathbf e_{\mathcal S,m+1}^{k+1}\rVert_2\).
  In the displayed sum, the term \(j=k+1\) bounds the contribution from
  \(\mathbf e_{\mathcal H,k+2}^{k+1}\) after propagation through the
  remaining \(q_h-1\) intervals.
  Using
  \(\lVert\mathbf e_j^k\rVert_2\le
    \max_{k+1\le i\le m}\lVert\mathbf e_i^k\rVert_2\), we obtain
  \begin{equation}
    \label{eq:later-node-discrepancy-envelope}
    \lVert\mathbf e_{\mathcal H,m+1}^{k+1}\rVert_2
    \le2C_{\mathcal D}h^{p+1}
       \left(\max_{k+1\le j\le m}\lVert\mathbf e_j^k\rVert_2\right)
       \sum_{\ell=0}^{q_h-1}(1+L_{\mathrm{tr}}h)^\ell
  \end{equation}
  and
  \begin{equation}
    \label{eq:later-node-split-envelope}
    \begin{aligned}
    \lVert\mathbf e_{\mathcal S,m+1}^{k+1}\rVert_2
    &\le(2C_{\mathcal D}h^{p+1}+L_\delta h+3C_rh^2)\times\left(\max_{k+1\le j\le m}\lVert\mathbf e_j^k\rVert_2\right)
       \sum_{\ell=0}^{q_h-1}(1+L_{\mathrm{tr}}h)^\ell.
    \end{aligned}
  \end{equation}
  For \(q_h\le q_0\), each term of the sum is at most
  \((1+L_{\mathrm{tr}}h_0)^{q_0-1}\), so
  \[
    \sum_{\ell=0}^{q_h-1}(1+L_{\mathrm{tr}}h)^\ell
    \le q_0(1+L_{\mathrm{tr}}h_0)^{q_0-1}.
  \]
  Substituting this bound in \eqref{eq:later-node-discrepancy-envelope} and
  \eqref{eq:later-node-split-envelope} and using \(h^{p+1}\le h_0^ph\) and
  \(h^2\le h_0h\), we obtain
  \[
    \begin{aligned}
    \lVert\mathbf e_{\mathcal H,m+1}^{k+1}\rVert_2
    &\le2C_{\mathcal D}q_0(1+L_{\mathrm{tr}}h_0)^{q_0-1}\,h^{p+1}
       \max_{k+1\le j\le m}\lVert\mathbf e_j^k\rVert_2,\\
    \lVert\mathbf e_{\mathcal S,m+1}^{k+1}\rVert_2
    &\le(2C_{\mathcal D}h_0^p+L_\delta+3C_rh_0)\,
       q_0(1+L_{\mathrm{tr}}h_0)^{q_0-1}\,h
       \max_{k+1\le j\le m}\lVert\mathbf e_j^k\rVert_2,
    \end{aligned}
  \]
  which proves \eqref{eq:fixed-offset-form-envelopes}.
  For \(q_hh\le\tau_*\), the inequality
  \(1+L_{\mathrm{tr}}h\le e^{L_{\mathrm{tr}}h}\) and \(q_h\le\tau_*/h\) imply
  \[
    \sum_{\ell=0}^{q_h-1}(1+L_{\mathrm{tr}}h)^\ell
    \le q_he^{L_{\mathrm{tr}}q_hh}
    \le\frac{\tau_*e^{L_{\mathrm{tr}}\tau_*}}{h}.
  \]
  Substituting this bound in \eqref{eq:later-node-discrepancy-envelope} and
  \eqref{eq:later-node-split-envelope} and using \(h\le h_0\), we obtain
  \[
    \begin{aligned}
    \lVert\mathbf e_{\mathcal H,m+1}^{k+1}\rVert_2
    &\le2C_{\mathcal D}\tau_*e^{L_{\mathrm{tr}}\tau_*}\,h^p
       \max_{k+1\le j\le m}\lVert\mathbf e_j^k\rVert_2,\\
    \lVert\mathbf e_{\mathcal S,m+1}^{k+1}\rVert_2
    &\le(2C_{\mathcal D}h_0^p+L_\delta+3C_rh_0)\,
       \tau_*e^{L_{\mathrm{tr}}\tau_*}
       \max_{k+1\le j\le m}\lVert\mathbf e_j^k\rVert_2,
    \end{aligned}
  \]
  which proves \eqref{eq:physical-window-form-envelopes}.
  The constants in these bounds depend only on \(p\), \(h_0\), \(q_0\),
  \(\tau_*\), and the grid-independent constants of
  Assumption~\ref{ass:active-prefix-source-transport}.
\end{proof}

At \(m=k+1\), Theorem~\ref{prop:later-node-form-envelopes} gives the two
upper bounds \eqref{eq:fixed-offset-form-envelopes} at \(T_{k+2}\).
Theorem~\ref{thm:first-active-form-boundary} adds the lower bound for
\(\lVert\mathbf e_{\mathcal S,k+2}^{k+1}\rVert_2\)
and the error ratio in \eqref{eq:first-active-scales} under
Assumption~\ref{ass:first-active-source-separation}.
At subsequent time points, Theorem~\ref{prop:later-node-form-envelopes}
provides the separate upper bounds
\eqref{eq:fixed-offset-form-envelopes} and
\eqref{eq:physical-window-form-envelopes}.
Compared with the estimates \eqref{eq:fixed-offset-form-envelopes}
for a bounded number of coarse intervals, the window estimates
\eqref{eq:physical-window-form-envelopes} lose one power of \(h\)
because the window may contain \(O(h^{-1})\) contributing intervals.

\begin{example}
\label{ex:powers-of-h}
This example shows that the powers of \(h\) in
Theorems~\ref{thm:first-active-form-boundary}
and~\ref{prop:later-node-form-envelopes} cannot be improved in general.
Consider the system
\[
  u'=uv,\qquad v'=0,\qquad (u(0),v(0))=(a,b)
\]
with fixed \(a,b>0\) on the family of grids in
\eqref{eq:uniform-grid-family}.
Let \(P=\operatorname{diag}(1,0)\), which keeps \(u\) and removes \(v\).
Write \(\mathbf y=(u,v)^\top\).
The fine solver is the exact local solution map
\(\mathcal F_n(\mathbf y)=(ue^{hv},v)^\top\).
The coarse solver \(\mathcal G_n(\mathbf y)=(uR_p(hv),v)^\top\) is the
Taylor method of a fixed integer order \(p\ge1\), with
\(R_p(z)=\sum_{\ell=0}^{p}z^\ell/\ell!\).
Both solvers are the identity on the ranges of \(P\) and \(Q\).
Hence \(\mathcal H_{P,n}=\mathcal G_n\) and \(\mathcal S_{P,n}=I\), so
scheme~\eqref{eq:projection-parareal} coincides with classical
\emph{parareal} in this example.
The mixed difference \(\boldsymbol\delta_n(\mathbf y;P)=(uv,0)^\top\)
couples the growth of \(u\) to the component \(v\) removed by \(P\).
The fine solver in \(\mathcal S_{P,n}\) is evaluated at
\(P\mathbf y=(u,0)^\top\), where \(u\) does not grow.

Set \(r:=e^{bh}\) and \(g:=R_p(bh)\).
Take \(\mathbf U^0\) to be the sequential coarse initialization.
Since both solvers leave \(v\) unchanged, every approximation has the
second component \(b\).
Hence the errors are \(\mathbf e_n^0=(\varepsilon_n^0,0)^\top\),
\(\mathbf e_{\mathcal H,n}^1=(\varepsilon_{\mathcal H,n}^1,0)^\top\), and
\(\mathbf e_{\mathcal S,n}^1=(\varepsilon_{\mathcal S,n}^1,0)^\top\).
Since \(\mathbf u_n^\star=(ar^n,b)^\top\) and
\(\mathbf U_n^0=(ag^n,b)^\top\), we have \(\varepsilon_n^0=a(r^n-g^n)\).
One iteration step of each scheme yields
\[
  \varepsilon_{\mathcal H,n+1}^1
  =g\,\varepsilon_{\mathcal H,n}^1+(r-g)\,\varepsilon_n^0,
  \qquad
  \varepsilon_{\mathcal S,n+1}^1
  =\varepsilon_{\mathcal S,n}^1+(r-1)\,\varepsilon_n^0,
\]
with \(\varepsilon_{\mathcal H,0}^1=\varepsilon_{\mathcal S,0}^1=0\).
Since \(r>g>1\), these scalars are nonnegative and equal the
\(2\)-norms of the errors.
In scheme~\eqref{eq:projection-parareal}, the coefficient of the
old error is the difference \(r-g\) between the fine and coarse growth
factors.
In scheme~\eqref{eq:direct-split-iteration}, it is the growth increment
\(r-1\) over one coarse interval.
At \(T_2\), the first two ratios in \eqref{eq:first-active-scales} are
therefore \(r-g=b^{p+1}h^{p+1}/(p+1)!+O(h^{p+2})\) and
\(r-1=bh+O(h^2)\).
The recurrences also yield
\(\varepsilon_{\mathcal H,N}^1=a[r^N-g^N-N(r-g)g^{N-1}]\) and
\(\varepsilon_{\mathcal S,N}^1=a[r^N-1-(r-1)(g^N-1)/(g-1)]\).
As \(h\to0\) with \(Nh=T\), we obtain
\[
  \frac{\varepsilon_{\mathcal H,N}^1}
       {h^p\max_{1\le j\le N-1}\varepsilon_j^0}
  \to\frac{Tb^{p+1}}{2(p+1)!},
  \qquad
  \frac{\varepsilon_{\mathcal S,N}^1}
       {\max_{1\le j\le N-1}\varepsilon_j^0}
  \to1-\frac{1-e^{-bT}}{bT}>0.
\]
All states and segments involved lie in the rectangle
\([0,ae^{bT}]\times[0,b]\), together with their projections.
On a bounded neighborhood of this rectangle, Taylor expansion in \(h\)
yields the bounds of Assumption~\ref{ass:local-form-source-regularity} with
constants that depend only on \(a\), \(b\), \(T\), \(p\), and \(h_0\).
Assumption~\ref{ass:first-active-source-separation} holds with
\(c_\delta=b\) in \eqref{eq:first-active-source-separation}, since
\(\Delta\boldsymbol\delta=b\,\mathbf e_1^0\).
Assumption~\ref{ass:active-prefix-source-transport} holds with
\(L_{\mathrm{tr}}=be^{bh_0}\) in \eqref{eq:later-node-transport-stability}:
the differences there are \((g\varepsilon,0)^\top\) and
\((\varepsilon,0)^\top\) for an error \((\varepsilon,0)^\top\), and
\(g\le e^{bh}\le1+be^{bh_0}h\).
Hence the powers of \(h\) in \eqref{eq:first-active-scales},
\eqref{eq:fixed-offset-form-envelopes}, and
\eqref{eq:physical-window-form-envelopes} cannot be improved in general.
\end{example}

\section{Numerical experiments}
\label{sec:numerical-experiments}

In this section, we present numerical experiments for
projection-based \emph{parareal} with two reaction--diffusion problems.
In Section~\ref{subsec:numerics-projection-resolved}, we evaluate
the upper bounds of Theorem~\ref{lem:projection-dependent-one-sweep} and
Corollary~\ref{cor:one-step-special-cases} for
Algorithm~\ref{alg:projection-parareal} as the iteration proceeds.
We also compare the convergence behavior of the two schemes.
In Section~\ref{subsec:numerics-form-specific-corrected-errors}, we illustrate
the estimates of Theorems~\ref{thm:first-active-form-boundary}
and~\ref{prop:later-node-form-envelopes} for the errors after one iteration
step as \(h\to0\).
All numerical experiments were carried out in double precision
using MATLAB R2025b on a desktop computer (AMD Ryzen~7 9800X3D CPU, 4.70~GHz,
32~GB RAM, 64-bit Windows~11 Pro).

\subsection{Discretization and solvers}
\label{subsec:numerical-test-problems}

We impose periodic boundary conditions in space
and take the time interval to be \([0,T]\) with \(T=2\).
We approximate second spatial derivatives by second-order centered
differences on uniform spatial grids.
Sections~\ref{subsec:numerics-projection-resolved}
and~\ref{subsec:numerics-form-specific-corrected-errors} state the two problems
and their spatial grids.
The spatial grids do not change with \(h\), since the analysis concerns a
system of fixed dimension \(d\).

Both problems are autonomous: the right-hand side of
\eqref{eq:initial-value-problem} is
\(\mathbf f(\mathbf y,t)=A\mathbf y+\mathbf g(\mathbf y)\), where \(A\) is the
discretized diffusion matrix and the cubic polynomial \(\mathbf g\) collects
the reaction terms.
Hence \(\mathcal F_n\) and \(\mathcal G_n\) do not depend on \(n\).
The fine solver uses the classical fourth-order Runge--Kutta method
with \(32\) equal substeps of length \(h/32\) on each coarse
interval \([T_n,T_{n+1}]\).
The coarse solvers treat the diffusion term \(A\mathbf y\)
implicitly and the reaction term \(\mathbf g(\mathbf y)\) explicitly.
One implicit--explicit (IMEX) Euler step~\cite[Sec.~IV.4.1]{hundsdorfer2003numerical}
of length \(h\) gives
\[
  \widetilde{\mathbf y}:=(I-hA)^{-1}\bigl(\mathbf y+h\mathbf g(\mathbf y)\bigr).
\]
For \(p=1\), we take \(\mathcal G_n(\mathbf y)=\widetilde{\mathbf y}\).
For \(p=2\), one Crank--Nicolson--Heun step uses \(\widetilde{\mathbf y}\) as a
predictor and computes \(\mathcal G_n(\mathbf y)\) from
\[
  \frac{\mathcal G_n(\mathbf y)-\mathbf y}{h}
  =\frac12 A\bigl[\mathbf y+\mathcal G_n(\mathbf y)\bigr]
   +\frac12\bigl[\mathbf g(\mathbf y)+\mathbf g(\widetilde{\mathbf y})\bigr].
\]
The fine and coarse solvers are smooth on all of \(\mathbb R^d\),
so we take \(\Omega_n=\mathbb R^d\).
Every iteration step is then admissible, and every step of scheme
\eqref{eq:projection-parareal} satisfies
Assumption~\ref{ass:local-pathwise-regularity}.
The fine solver is a Runge--Kutta method of order four.
By the property recalled in
Section~\ref{subsec:projection-dependent-one-sweep},
\(\mathrm D\mathcal F_n(\mathbf y)\) therefore approximates the derivative of
the exact local solution map with respect to the initial value up to
\(O(h^5)\).
The local expansions of the two coarse solvers above show that
\(\mathrm D\mathcal G_n(\mathbf y)\) approximates it up to \(O(h^{p+1})\).
Hence
\[
  \bigl\lVert\mathrm D\mathcal D_n(\mathbf y)\bigr\rVert_2
  =\bigl\lVert\mathrm D\mathcal F_n(\mathbf y)
       -\mathrm D\mathcal G_n(\mathbf y)\bigr\rVert_2
  =O(h^{p+1}),\qquad p=1,2,
\]
uniformly on fixed bounded sets.
Thus \eqref{eq:derivative-source-bounds} in
Assumption~\ref{ass:form-derivative-bound} holds whenever \(\mathbf x\) and
\(\mathbf w\) lie in a fixed bounded set, with \(C_{\mathcal D}\) depending on
that set.
On these sets, the same expansions yield the bound
\(\lVert\mathrm D\mathbf r_n(\mathbf y)\rVert_2=O(h^2)\) in
Assumption~\ref{ass:form-expansion-bounds}.
The polynomial mixed difference in \eqref{eq:mixed-interaction} also satisfies
the bound \(\lVert\mathrm D\boldsymbol\delta_n(\mathbf y;P)\rVert_2=O(1)\) in
Assumption~\ref{ass:form-expansion-bounds} there.

\subsection{Convergence comparison as the iteration proceeds}
\label{subsec:numerics-projection-resolved}

The first experiment uses a two-dimensional Allen--Cahn equation
\begin{equation}
  \label{eq:numerical-allen-cahn}
  \partial_tu=0.001\,(\partial_{\xi\xi}u+\partial_{\eta\eta}u)+8(u-u^3),
  \qquad(\xi,\eta)\in[0,1)^2,\quad t\in(0,2],
\end{equation}
with \(u\) periodic in \(\xi\) and \(\eta\) and with the initial condition
\[
  \begin{aligned}
    u(\xi,\eta,0)&=0.55\cos(2\pi\xi+0.4)\cos(2\pi\eta+1.1)
       +0.30\sin\bigl(2\pi(2\xi+\eta)+0.7\bigr)\\
       &\quad+0.20\cos\bigl(2\pi(\xi-2\eta)+2.3\bigr).
  \end{aligned}
\]
The Allen--Cahn equation was introduced to model antiphase boundary
motion and domain coarsening~\cite{allen1979microscopic}.
Equations of this type are common test problems for exponential
integrators~\cite{kassam2005fourth}, maximum-principle-preserving
schemes~\cite{li2020cutoff}, and classical
\emph{parareal}~\cite{jin2025optimizingcoarse}.
The cubic reaction term makes the coarse solvers nonlinear, so the two
schemes can give different approximations when \(P^k\ne0\) and \(P^k\ne I\)
(Section~\ref{subsec:projection-based-parareal}).
They differ through the mixed difference \eqref{eq:mixed-interaction}, which
for this equation is proportional to the reaction coefficient~\(8\)
in~\eqref{eq:numerical-allen-cahn}.
We approximate \(\partial_{\xi\xi}u\) and \(\partial_{\eta\eta}u\) at the points
\((i/16,j/16)\) for \({i,j=0,\ldots,15}\), with the indices \(i\pm1\) and
\(j\pm1\) taken modulo \(16\).
The resulting system has dimension \(d=256\).
Its time-dependent vector solution \(\mathbf y(t)\in\mathbb R^{256}\) has the
entries \(y_{i+16j}(t)\approx u(i/16,j/16,t)\) for \(i,j=0,\ldots,15\).

We perform iterations with each of the two variants of
\emph{parareal} defined by schemes \eqref{eq:projection-parareal} and
\eqref{eq:direct-split-iteration} and report \(E_\infty^k\) as the iteration
proceeds.
Recall from Section~\ref{subsec:classical-parareal} that
\[
  E_\infty^k=\max_{0\le n\le N}\lVert\mathbf u_n^\star-\mathbf U_n^k\rVert_2,
\]
where \(\mathbf u^\star\) is the fine reference solution in
\eqref{eq:fine-reference}.
For these iterations, we fix \(N=32\), \(h=1/16\), and \(p=1\).
Both variants start from the same sequential coarse initialization.
We stop scheme \eqref{eq:projection-parareal} at the first \(k\) with
\(E_\infty^k\le10^{-12}\) and run scheme \eqref{eq:direct-split-iteration}
to the same \(k\), unless it overflows earlier.
Scheme \eqref{eq:direct-split-iteration} replaces \(\mathcal G_n\) in
\eqref{eq:classical-parareal} by \(\mathcal S_{P^k,n}\) as Krylov-enhanced
\emph{parareal} does, now with nonlinear solvers and an arbitrary orthogonal
projection (Section~\ref{subsec:krylov-enhanced-parareal}).
We use the following three projection choices.
\begin{itemize}
\item \emph{None.} We take \(P^k=0\), so both variants reduce to
classical \emph{parareal}.

\item \emph{Krylov-enhanced subspace.} We construct \(P^k\) for each
variant from the vectors spanning
\[
  \mathcal V^k=\operatorname{span}
  \{\mathbf U_j^\ell:0\le\ell\le k,\ 0\le j\le N\},
\]
following the construction of Krylov-enhanced \emph{parareal}~\cite{gander2008krylov}
recalled in Section~\ref{subsec:krylov-enhanced-parareal}.
In the computation, we form the matrix with columns
\(\mathbf U_j^\ell\).
Then \(P^k\) is the orthogonal projection onto the span of its left singular
vectors whose singular values exceed \(10^{-12}\) times the largest one.

\item \emph{Proper orthogonal decomposition (POD)}~\cite{sirovich1987coherent}.
We construct \(P^k\) for all \(k\) and both variants from eight
sequences generated by recurrence~\eqref{eq:fine-reference}.
These sequences start from the initial conditions
\[
  \begin{aligned}
    u(\xi,\eta,0)&=0.55(1+0.2\sigma_1)\cos(2\pi\xi+0.4)\cos(2\pi\eta+1.1)\\
       &\quad+0.30(1+0.2\sigma_2)\sin\bigl(2\pi(2\xi+\eta)+0.7\bigr)\\
       &\quad+0.20(1+0.2\sigma_3)\cos\bigl(2\pi(\xi-2\eta)+2.3\bigr),\\
       &\qquad(\sigma_1,\sigma_2,\sigma_3)\in\{-1,1\}^3.
  \end{aligned}
\]
The initial condition of the problem corresponds to
\(\sigma_1=\sigma_2=\sigma_3=0\), so none of the eight sequences is the fine
reference solution \(\mathbf u^\star\).
We form the \(256\times264\) matrix with the vectors of these sequences as
columns.
Then \(P^k\) is the orthogonal projection onto the span of its \(16\) leading
left singular vectors.
\end{itemize}

We now evaluate the upper bounds of
Theorem~\ref{lem:projection-dependent-one-sweep} and
Corollary~\ref{cor:one-step-special-cases} for
Algorithm~\ref{alg:projection-parareal}.
For each projection choice and each iteration step \(k\to k+1\), write
\(P=P^k\) and \(Q=I-P\), and set
\begin{equation}
  \label{eq:numerical-projection-maxima}
  m_Q^k:=\max_{n\in\mathcal A_k}\lVert Q\mathbf e_n^k\rVert_2,
  \qquad
  m_P^k:=\max_{n\in\mathcal A_k}\lVert P\mathbf e_n^k\rVert_2.
\end{equation}
Recall from Section~\ref{subsec:projection-dependent-one-sweep}
that
\[
  U_k^\perp(P)=\max_{n\in\mathcal A_k}\lVert Q\mathbf U_n^k\rVert_2.
\]
For brevity, we write \(U_k^\perp\) for \(U_k^\perp(P)\) in this
subsection.
For \(E_\infty^k>0\), the definitions imply
\(m_Q^k=E_\infty^k\theta_Q^k(P)\) and
\(m_P^k=E_\infty^k\theta_P^k(P)\).

We compute numerical estimates \(\widehat\mu_k\) and \(\widehat\nu_k\)
of the smallest constants \(\mu_k\) and \(\nu_k\) in \eqref{eq:source-path-bounds},
and \(\widehat\gamma_k\) of the propagation factor \(\gamma_k\) in
\eqref{eq:causal-gain}.
The Jacobians \(\mathrm D\mathcal F_n(\mathbf y)\) and
\(\mathrm D\mathcal G_n(\mathbf y)\) are computed by differentiating the
numerical solver formulas with respect to \(\mathbf y\).
\begin{samepage}%
The set \(\mathcal T_k\) consists of \(0\), \(1/2\), \(1\), and \(50\)
values drawn independently from the uniform distribution on \([0,1]\) with a
fixed seed.
For \(\widehat\mu_k\) and \(\widehat\nu_k\), we sample
\(\mathbf z_n^k(\tau)=\mathbf U_n^k+\tau\mathbf e_n^k\) at \(\tau\in\mathcal T_k\)
and set
\[
  \begin{aligned}
    \widehat\mu_k
    &:=\max_{\substack{n\in\mathcal A_k\\\tau\in\mathcal T_k}}
      \lVert\mathrm D\mathcal D_n(\mathbf z_n^k(\tau))\rVert_2,\\[2pt]
    \widehat\nu_k
    &:=\max_{\substack{n\in\mathcal A_k\\\tau\in\mathcal T_k}}
      \frac{\lVert\mathrm D\mathcal D_n(\mathbf z_n^k(\tau))
            -\mathrm D\mathcal D_n(P\mathbf z_n^k(\tau))\rVert_2}
           {\lVert Q\mathbf z_n^k(\tau)\rVert_2}.
  \end{aligned}
\]
\end{samepage}%
Samples with
\[
  \lVert Q\mathbf z_n^k(\tau)\rVert_2
  \le10^{3}\cdot2^{-53}\max\{1,\lVert\mathbf z_n^k(\tau)\rVert_2\}
\]
are omitted from the maximum in \(\widehat\nu_k\), to avoid quotients dominated
by rounding errors.
For \(\widehat\gamma_k\), the averaged Jacobians \(B_n^{k+1}\) in
\eqref{eq:realized-secant} are approximated by \(16\)-point
Gauss\nobreakdash--Legendre quadrature along
\(\boldsymbol\zeta_n^{k+1}(\tau)=\mathbf U_n^{k+1}+\tau\mathbf e_n^{k+1}\).
Substituting these approximations into \eqref{eq:ordered-products} and
\eqref{eq:causal-gain} gives \(\widehat\gamma_k\).

For the three projection choices, we use the upper bounds of the
following results.
\begin{itemize}
\item \emph{None.} Corollary~\ref{cor:zero-projection},
since \(P^k=0\).
\item \emph{Krylov-enhanced subspace.}
Corollary~\ref{cor:old-state-projection}.
\item \emph{POD.} Theorem~\ref{lem:projection-dependent-one-sweep},
since \(P^k\mathbf U_n^k\ne\mathbf U_n^k\) for some \(n\in\mathcal A_k\).
\end{itemize}
For \(P^k=0\) and for the Krylov-enhanced subspace, we use
\(\widehat C_{\mathcal D}h^{p+1}\) in place of \(\mu_k\) with
\[
  \widehat C_{\mathcal D}:=h^{-(p+1)}\max_{\ell}\widehat\mu_\ell.
\]
Here \(\ell\) ranges over the iteration steps performed.
We compute \(\widehat C_{\mathcal D}\) separately for each of these two
projection choices.
We substitute these estimates into the three upper bounds and define
\begingroup
\setlength{\abovedisplayskip}{6pt}
\setlength{\belowdisplayskip}{6pt}
\begin{equation}
  \label{eq:numerical-one-step-estimate}
  \widehat b_{k+1}:=\widehat\gamma_k
  \begin{cases}
    \widehat C_{\mathcal D}h^{p+1}m_Q^k,
      &\text{None},\\[2pt]
    m_Q^k\bigl(\widehat C_{\mathcal D}h^{p+1}
                      +\tfrac12\widehat\nu_k E_\infty^k\bigr),
      &\text{Krylov-enhanced subspace},\\[2pt]
    \widehat\mu_k m_Q^k+\widehat\nu_k U_k^\perp m_P^k
       +\tfrac12\widehat\nu_k m_Q^k m_P^k,
      &\text{POD}.
  \end{cases}
\end{equation}
\endgroup%
Since \(\widehat\mu_k\) and \(\widehat\nu_k\) are maxima over finitely
many values of \(\tau\) and the averaged Jacobians are approximated by
quadrature, \(\widehat b_{k+1}\) is an estimated upper bound on
\(E_\infty^{k+1}\) rather than a rigorous one.
For the Krylov-enhanced subspace, the truncation makes
\(P^k\mathbf U_n^k=\mathbf U_n^k\) hold only approximately, but the term
\(\widehat\gamma_k\widehat\nu_kU_k^\perp m_P^k\) that
Corollary~\ref{cor:old-state-projection} drops from
Theorem~\ref{lem:projection-dependent-one-sweep} stays below
\(10^{-9}\,\widehat b_{k+1}\) at every iteration step.
\begin{figure}[!tb]
  \centering
  \includegraphics[width=\textwidth]{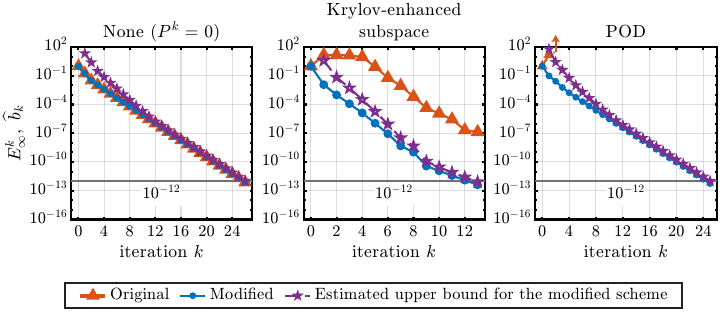}
  \caption{Values of \(E_\infty^k\) for the two schemes and of the estimated
upper bound \(\widehat b_k\) in \eqref{eq:numerical-one-step-estimate} for the
modified scheme in the Allen--Cahn experiment.}
  \label{fig:pillar-one-combined-estimates}
\end{figure}

Figure~\ref{fig:pillar-one-combined-estimates} shows \(E_\infty^k\)
of the two schemes and the estimated upper bound \(\widehat b_k\) for
scheme~\eqref{eq:projection-parareal}, with one column for each projection
choice.
In the legends of
Figures~\ref{fig:pillar-one-combined-estimates}\nobreakdash--\ref{fig:pillar-two-window-envelopes},
``Original'' denotes scheme~\eqref{eq:direct-split-iteration} and
``Modified'' denotes scheme~\eqref{eq:projection-parareal}.
The horizontal line is the stopping tolerance \(10^{-12}\) for scheme
\eqref{eq:projection-parareal}.

We summarize the comparisons in
Figure~\ref{fig:pillar-one-combined-estimates} as follows.
\begin{itemize}
\item The computed \(E_\infty^k\) of
scheme~\eqref{eq:projection-parareal} is consistent with the upper bounds of
Corollary~\ref{cor:zero-projection} for \(P^k=0\), of
Corollary~\ref{cor:old-state-projection} for the Krylov-enhanced subspace,
and of Theorem~\ref{lem:projection-dependent-one-sweep} for POD.
At every \(k\ge1\) shown, the estimated upper bound \(\widehat b_k\) exceeds
\(E_\infty^k\).
In each column, the ratio \(\widehat b_k/E_\infty^k\) decreases as
\(k\) increases.
It lies between \(10^2\) and \(10^3\) at \(k=1\) and below \(2.1\) at the
last \(k\) shown.
\item The two schemes give different \(E_\infty^k\) with the two
nonzero projections.
For \(P^k=0\), both schemes reduce to classical \emph{parareal} and give the
same \(E_\infty^k\).
The comparisons with nonzero projections are observations, since
Theorem~\ref{lem:projection-dependent-one-sweep} covers only
scheme~\eqref{eq:projection-parareal} and the analysis of Krylov-enhanced
\emph{parareal} in~\cite{gander2008krylov} concerns linear problems.
With the Krylov-enhanced subspace, scheme~\eqref{eq:projection-parareal}
gives smaller \(E_\infty^k\) than scheme~\eqref{eq:direct-split-iteration}
at every \(k\ge1\) shown.
For scheme~\eqref{eq:direct-split-iteration}, \(E_\infty^k\) exceeds
\(E_\infty^0\) for \(k=1,\ldots,4\) and is still above \(10^{-7}\) when
scheme~\eqref{eq:projection-parareal} stops at \(k=13\).
With POD, \(E_\infty^1\) of scheme~\eqref{eq:direct-split-iteration} also
exceeds \(E_\infty^0\).
In the iteration step \(1\to2\), floating-point overflow occurs when the
explicit fine solver computes \(\mathcal F_{18}(P^1\mathbf U_{18}^2)\).
The computed \(\mathbf U_n^2\) are therefore not finite for \(n\ge19\).
A dotted segment and an upward arrow at \(k=2\) mark this overflow.
\item The projection strongly affects the convergence of
scheme~\eqref{eq:projection-parareal}.
With POD, \(E_\infty^k\) of scheme~\eqref{eq:projection-parareal} is close to
that for \(P^k=0\) at every \(k\) shown.
With the Krylov-enhanced subspace, \(E_\infty^k\) of
scheme~\eqref{eq:projection-parareal} decreases much faster.
\end{itemize}

\FloatBarrier

\subsection{Asymptotic error comparison}
\label{subsec:numerics-form-specific-corrected-errors}

The second experiment uses a spatial FitzHugh--Nagumo system
\[
  \begin{aligned}
    \partial_tu&=0.001\,\partial_{\xi\xi}u
        +10\bigl[u(1-u)(u-0.1)-v\bigr],\\
    \partial_tv&=0.0001\,\partial_{\xi\xi}v+u-2v,
  \end{aligned}
  \qquad\xi\in[0,1),\quad t\in(0,2],
\]
with \(u\) and \(v\) periodic in \(\xi\).
Writing
\(\operatorname{dist}(\xi,c):=\min_{\ell\in\mathbb Z}|\xi-c-\ell|\)
for the distance on the periodic spatial interval, we take
the initial condition
\[
  \begin{aligned}
    u(\xi,0)&=0.2+0.8\exp\bigl(-80\operatorname{dist}(\xi,0.3)^2\bigr),\\
    v(\xi,0)&=0.02\sin(2\pi\xi).
  \end{aligned}
\]
The FitzHugh--Nagumo system originates in models of nerve
membrane~\cite{fitzhugh1961impulses} and of pulse propagation along a nerve
axon~\cite{nagumo1962active}.
Spatial systems of this type are common test problems for model-order
reduction~\cite{benner2021polynomial}, exponential
integrators~\cite{caliari2024directional}, and classical
\emph{parareal}~\cite{gattiglio2025nngparareal}.
We approximate \(\partial_{\xi\xi}u\) and \(\partial_{\xi\xi}v\)
at the points \(i/32\) for \({i=0,\ldots,31}\), with the indices
\(i\pm1\) taken modulo \(32\).
The resulting system has dimension \(d=2\cdot32=64\).
Its time-dependent vector solution \(\mathbf y(t)\in\mathbb R^{64}\)
has the entries \(y_i(t)\approx u(i/32,t)\) and \(y_{32+i}(t)\approx v(i/32,t)\)
for \(i=0,\ldots,31\).

We illustrate the asymptotic error estimates of
Theorems~\ref{thm:first-active-form-boundary} and
\ref{prop:later-node-form-envelopes} using the errors after one iteration step.
We use \(N=2^4,2^5,\ldots,2^{10}\), \(h=2/N\), and \(p=1,2\).
Let \(\boldsymbol\phi,\boldsymbol\psi\in\mathbb R^{32}\) have the entries
\(\phi_i=\sin(2\pi i/32)\) and \(\psi_i=\cos(2\pi i/32)\) for \(i=0,\ldots,31\).
Since \(\boldsymbol\phi^\top\boldsymbol\psi=0\) and
\(\lVert\boldsymbol\phi\rVert_2^2=\lVert\boldsymbol\psi\rVert_2^2=16\), the matrix
\[
  \Pi:=\frac{1}{16}\bigl(\boldsymbol\phi\boldsymbol\phi^\top
      +\boldsymbol\psi\boldsymbol\psi^\top\bigr)
\]
is the orthogonal projection onto \(\operatorname{span}\{\boldsymbol\phi,\boldsymbol\psi\}\).
For all \(h\), we take \(P=\operatorname{diag}(\Pi,\Pi)\).
This projection applies \(\Pi\) to the grid values of \(u\) and to those of
\(v\).
Projections onto trigonometric modes have also been used in
\emph{parareal}~\cite{dai2013projection,reynoldsbarredo2013analytic}.
From the sequential coarse initialization \(\mathbf U^0\), one iteration step of
scheme \eqref{eq:projection-parareal} and one of scheme
\eqref{eq:direct-split-iteration} with the same \(P\) give
\(\mathbf U_{\mathcal H}^1\) and \(\mathbf U_{\mathcal S}^1\).
This is the case \(k=0\) of Theorems~\ref{thm:first-active-form-boundary}
and~\ref{prop:later-node-form-envelopes}.
We examine the errors at \(T_2=2h\), \(T_3=3h\), and \(T_N=Nh=2\).
Only \(T_N\) is fixed as \(h\to0\).

Since the linear terms of \(\mathbf f\) cancel in
\eqref{eq:mixed-interaction}, the mixed difference of this system has the
entries
\[
  [\boldsymbol\delta_n(\mathbf y;P)]_i=10\,[P\mathbf y]_i\,[Q\mathbf y]_i\,(2.2-3y_i),
  \qquad
  [\boldsymbol\delta_n(\mathbf y;P)]_{32+i}=0,
  \qquad i=0,\ldots,31.
\]%
For all tested grids and both values of \(p\), the old error
\(\mathbf e_1^0\) is nonzero and the quotient
\[
  \frac{\lVert\boldsymbol\delta_1(\mathbf u_1^\star;P)
        -\boldsymbol\delta_1(\mathbf U_1^0;P)\rVert_2}
       {\lVert\mathbf e_1^0\rVert_2}
\]
lies between \(6.8\) and \(9.1\).
On the three finest grids, the quotient lies between \(7.27\)
and \(7.29\) for \(p=1\) and between \(9.03\) and \(9.05\) for \(p=2\).
These values support the lower bound
\eqref{eq:first-active-source-separation} in
Assumption~\ref{ass:first-active-source-separation} on the tested grids.
The uniform upper bounds in
Assumptions~\ref{ass:first-active-source-separation}
and~\ref{ass:active-prefix-source-transport} hold if
\(\mathbf u_j^\star\), \(\mathbf U_j^0\), \(\mathbf U_{\mathcal H,j}^1\), and
\(\mathbf U_{\mathcal S,j}^1\) remain in a common bounded set for
\(1\le j\le N-1\) as \(h\to0\).
Section~\ref{subsec:numerical-test-problems} then gives the bounds of
Assumption~\ref{ass:form-expansion-bounds} uniformly at the pairs
\((\mathbf u_j^\star,\mathbf U_j^0)\).
The solver formulas also imply \(\mathrm D\mathcal H_{P,n}(\mathbf y)=I+O(h)\)
and \(\mathrm D\mathcal S_{P,n}(\mathbf y)=I+O(h)\) uniformly on fixed bounded
sets.
By the mean value inequality along the segments joining
\(\mathbf U_{\mathcal H,j}^1\) and \(\mathbf U_{\mathcal S,j}^1\) to
\(\mathbf u_j^\star\), these derivative bounds imply the propagation bounds
\eqref{eq:later-node-transport-stability} in
Assumption~\ref{ass:active-prefix-source-transport}.
The maximum of the \(2\)-norms of these four vectors over \(1\le j\le N-1\)
and \(p=1,2\) is at most \(2.76\) on every tested grid and decreases as \(h\)
decreases.
Together with the stated setup, these observations support the assumptions of
Theorems~\ref{thm:first-active-form-boundary}
and~\ref{prop:later-node-form-envelopes} on the tested grids.

\begin{figure}[!b]
  \centering
  \includegraphics[width=\textwidth]{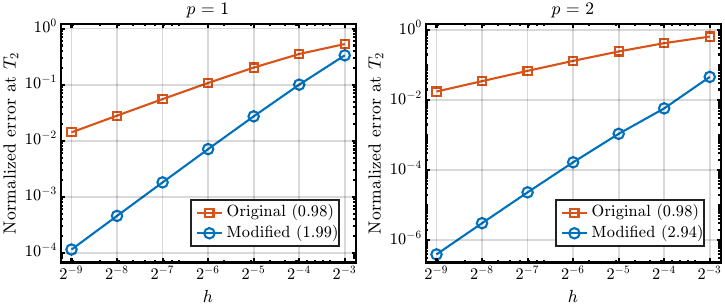}
  \caption{Normalized errors \eqref{eq:numerical-first-point-errors}
at the first potentially inexact time point \(T_2\) in the
FitzHugh--Nagumo experiment for coarse solvers of different orders
\(p\), with fitted slopes given in parentheses in the legends.
These slopes are close to \(1\) for ``Original'' and to \(p+1\) for
``Modified''.}
  \label{fig:pillar-two-first-active-form-boundary}
\end{figure}
At \(T_2\), we normalize the error norms of both schemes by the old
error norm \(\lVert\mathbf e_1^0\rVert_2\).
Theorem~\ref{thm:first-active-form-boundary} gives an
\(O(h^{p+1})\) upper bound for the normalized error of
scheme~\eqref{eq:projection-parareal} and a two-sided \(\Theta(h)\) estimate
for that of scheme~\eqref{eq:direct-split-iteration}.
Since \(q_h=2\) at \(T_3\) and \(q_hh=T-h\) at \(T_N\),
Theorem~\ref{prop:later-node-form-envelopes} gives the upper bounds
\eqref{eq:fixed-offset-form-envelopes} at \(T_3\) and
\eqref{eq:physical-window-form-envelopes} at \(T_N\).
For scheme~\eqref{eq:projection-parareal}, the powers of \(h\) in
these bounds are \(h^{p+1}\) at \(T_3\) and \(h^p\) at \(T_N\).
For scheme~\eqref{eq:direct-split-iteration}, they are \(h\) at \(T_3\) and
\(h^0\) at \(T_N\).

At the first potentially inexact time point \(T_2\), we plot in
Figure~\ref{fig:pillar-two-first-active-form-boundary} the normalized errors
\begin{equation}
  \label{eq:numerical-first-point-errors}
  \frac{\lVert\mathbf e_{\mathcal H,2}^1\rVert_2}
       {\lVert\mathbf e_1^0\rVert_2},
  \qquad
  \frac{\lVert\mathbf e_{\mathcal S,2}^1\rVert_2}
       {\lVert\mathbf e_1^0\rVert_2}.
\end{equation}
With \(m=2\) for \(T_3\) and \(m=N-1\) for \(T_N\), we plot in
Figures~\ref{fig:pillar-two-later-node-envelopes}
and~\ref{fig:pillar-two-window-envelopes} the error norms at
\(T_{m+1}\) normalized by the largest old error norm at \(T_1,\ldots,T_m\)
\begin{equation}
  \label{eq:numerical-later-point-errors}
  \frac{\lVert\mathbf e_{\mathcal H,m+1}^1\rVert_2}
       {\max_{1\le j\le m}\lVert\mathbf e_j^0\rVert_2},
  \qquad
  \frac{\lVert\mathbf e_{\mathcal S,m+1}^1\rVert_2}
       {\max_{1\le j\le m}\lVert\mathbf e_j^0\rVert_2}
  \qquad(m=2,N-1).
\end{equation}
The slope of each curve is obtained from a least-squares fit in
log--log coordinates to its values at \(h=2^{-9},2^{-8},2^{-7}\).
The legends of
Figures~\ref{fig:pillar-two-first-active-form-boundary}\nobreakdash--\ref{fig:pillar-two-window-envelopes}
give these fitted slopes in parentheses.

\begin{figure}[!t]
  \centering
  \includegraphics[width=\textwidth]{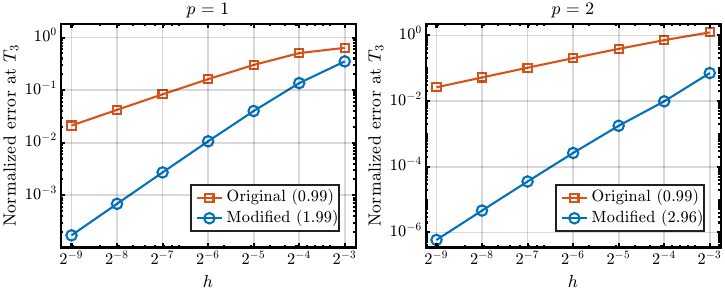}
  \caption{Normalized errors \eqref{eq:numerical-later-point-errors}
with \(m=2\) at \(T_3\) in the FitzHugh--Nagumo experiment
for coarse solvers of different orders \(p\), with fitted slopes
given in parentheses in the legends.
These slopes are close to \(1\) for ``Original'' and to \(p+1\) for
``Modified''.}
  \label{fig:pillar-two-later-node-envelopes}
\end{figure}
\begin{figure}[!t]
  \centering
  \includegraphics[width=\textwidth]{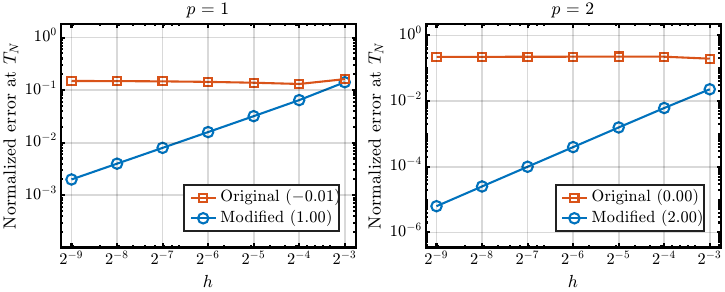}
  \caption{Normalized errors \eqref{eq:numerical-later-point-errors}
with \(m=N-1\) at \(T_N\) in the FitzHugh--Nagumo experiment for coarse solvers
of different orders \(p\), with fitted slopes given in parentheses in the
legends. These slopes are close to \(0\) for ``Original'' and to \(p\) for
``Modified''.}
  \label{fig:pillar-two-window-envelopes}
\end{figure}

We first compare the normalized errors at \(T_2\) with the estimates
of Theorem~\ref{thm:first-active-form-boundary}.
For scheme~\eqref{eq:projection-parareal}, the normalized error decreases like
\(h^{p+1}\).
This is consistent with the \(O(h^{p+1})\) upper bound.
For scheme~\eqref{eq:direct-split-iteration}, the fitted slopes are close to
\(1\) for both values of \(p\).
This supports the two-sided \(\Theta(h)\) estimate.

We next compare the normalized errors at \(T_3\) and \(T_N\) with the
upper bounds of Theorem~\ref{prop:later-node-form-envelopes}.
The bounds \eqref{eq:fixed-offset-form-envelopes} apply at \(T_3\), where the
number \(q_h=2\) of coarse intervals is fixed.
The normalized error decreases like \(h^{p+1}\) for
scheme~\eqref{eq:projection-parareal} and like \(h\) for
scheme~\eqref{eq:direct-split-iteration}.
This is consistent with the powers \(h^{p+1}\) and \(h\) in
\eqref{eq:fixed-offset-form-envelopes}.
The bounds \eqref{eq:physical-window-form-envelopes} apply at \(T_N\), where
the \(q_h=N-1\) coarse intervals lie in the fixed time window \([0,T]\).
The normalized error decreases like \(h^p\) for
scheme~\eqref{eq:projection-parareal} and stays nearly constant for
scheme~\eqref{eq:direct-split-iteration}.
This is consistent with the powers \(h^p\) and \(h^0\) in
\eqref{eq:physical-window-form-envelopes}.
On the tested grids, the error norm \(\lVert\mathbf e_{\mathcal S,N}^{1}\rVert_2\)
and the denominator in \eqref{eq:numerical-later-point-errors} for \(m=N-1\)
both decrease like \(h^p\).

\section{Conclusion}
\label{sec:conclusion}

We studied a projection-based \emph{parareal} algorithm for nonlinear
initial-value problems with a modified iteration scheme.
On a fixed time grid, we proved a convergence estimate for each iteration step of the algorithm.
The estimate shows how the projection affects the convergence of the iteration
through the components of the old error outside its range and through the
distance from the old approximations to this range.
As \({h\to0}\), we proved that the error of the
modified scheme at the first potentially inexact time point is of higher order
than that of the scheme of Krylov-enhanced \emph{parareal} under mild assumptions.
At later time points, we derived separate upper bounds for the two schemes
over a bounded number of coarse intervals and over a fixed time window.
Numerical experiments with the Allen--Cahn equation and a FitzHugh--Nagumo
system are consistent with our theoretical analysis.
Future work may replace the fine solver on the range of the
projection in the sequential update by a reduced model, as in the
reduced-order coarse solvers of~\cite{he2010reduced,chen2014parareal}.

\section*{Acknowledgments}
The authors used large language models (LLMs) in the research
process and in the preparation of this manuscript.
\begin{itemize}
\item In the research process, LLMs were used to assist in designing
code for the numerical experiments and to check the theoretical results.
\item In the preparation of this manuscript, LLMs were used to
improve the readability of the text and to check the references.
\end{itemize}
The authors assume responsibility for all content.

\bibliographystyle{siamplain}
\bibliography{references}
\end{document}